\documentclass{article}

\usepackage{amsmath, amssymb, amsfonts, amstext, verbatim, amsthm, mathrsfs}
\usepackage{bm}
\numberwithin{equation}{section} %label equation by sections
\usepackage{geometry}
\usepackage{tikz-cd}
\usepackage{tikz} %clashes with color package option "usenames"
\usepackage{enumerate}
\usepackage{enumitem}
\usepackage{appendix}

\usepackage[colorlinks=true,linkcolor=blue,citecolor=blue,urlcolor=blue,citebordercolor={0 0 1},urlbordercolor={0 0 1},linkbordercolor={0 0 1}]{hyperref} %needs to be loaded after most things
\usepackage[nameinlink]{cleveref}

\theoremstyle{plain}
\newtheorem*{thm}{Theorem}
\newtheorem{theorem}{Theorem}[section]
\newtheorem{corollary}{Corollary}[theorem]
\newtheorem{proposition}[theorem]{Proposition}
\newtheorem*{prop}{Proposition}
\newtheorem{lemma}[theorem]{Lemma}
\newtheorem{question}{Question}
\newtheorem*{ques}{Question}

\theoremstyle{definition}
\newtheorem*{de}{Definition--Theorem}
\newtheorem{definition}[theorem]{Definition}
\newtheorem{example}[theorem]{Example}

\theoremstyle{remark}
\newtheorem{remark}{Remark}

\providecommand{\keywords}[1]
{
  \small	
  \textbf{Key Words } #1
  \normalsize
}

\providecommand{\class}[1]
{
  \small	
  \textbf{Mathematics Subject Classification 2020 } #1
  \normalsize
}

\AtEndDocument{%
\bibliographystyle{plain}
\bibliography{./reference.bib}
}
\title{Derived representation schemes with arbitrary coefficients and associative smoothness}
\author{Guanyu Li}
\begin{document}
\maketitle

%%%%%%%%%%%%%%%%%%%%%%%%%%%%%%%%%%%%%%%%%%%%%

\begin{abstract}
We study associative smoothness through representation homology with coefficients in arbitrary finite-dimensional algebras.
We prove that the higher representation homology of a finitely generated formally smooth algebra with arbitrary finite-dimensional coefficients vanishes.
We further show that allowing arbitrary coefficients yields a strictly stronger smoothness test: suitable non-matrix coefficients detect nonsmoothness in situations where matrix coefficients do not.
These results raise the question of whether representation homology with arbitrary coefficients furnishes a homotopical characterization of associative smoothness in the spirit of the derived Kontsevich–Rosenberg principle.
For finite-dimensional algebras, we prove that representation homology with arbitrary coefficients completely characterizes formal smoothness, providing supporting evidence for this philosophy.
\end{abstract}

%%%%%%%%%%%%%%%%%%%%%%%%%%%%%%%%%%%%%%%%%%%%%
\keywords{Derived representation schemes, representation homology, associative smoothness, Kontsevich-Rosenberg principle.}
~\par

\class{Primary: 14A22, 14A30, Secondary: 16S38, 18G90.}

\section{Introduction}

Given an associative algebra $A$ over a field $k$, its representations in a finite-dimensional vector space $V$ are parametrized by an affine scheme $\operatorname{Rep}_V(A)$, called the representation scheme.
Following Kontsevich and Rosenberg (\cite[Section 9]{K93}, \cite{KR00}), one may regard these schemes as commutative approximations to the ``noncommutative scheme" $\mathrm{Spec}~A$, leading to a heuristic principle that geometric properties of $A$ should be reflected in the geometry of $\operatorname{Rep}_V(A)$.
This philosophy extends to the derived setting through the derived representation schemes $\operatorname{DRep}_V(A)$ and their algebraic invariants $\mathrm{HR}_*(A,V)$, called \textbf{representation homology}, which measure the failure of the classical approximation (\cite{BFR14,BKR13}).
A notable realization of the derived Kontsevich-Rosenberg principle is the vanishing theorem of Berest–Felder–Ramadoss: if $A$ is formally smooth under additional finiteness hypotheses, then $\mathrm{HR}_i(A,V)=0$ for $i>0$ (\cite[Theorem 20]{BFR14}).

While representation homology detects nonsmoothness invisible to ordinary representation schemes, the converse of the higher-vanishing theorem nevertheless fails in general.
Indeed, there exist associative algebras that are not formally smooth but whose higher representation homology groups vanish; see \cite[Section 6.2.2]{BFR14}\footnote{In this example, the ordinary representation scheme is itself singular.}.
\emph{This suggests that representation homology with matrix coefficients is not sufficient to detect smoothness in general.}
The guiding idea of this paper is to enlarge the class of coefficients used in representation homology.
A representation of an associative algebra $A$ is an algebra homomorphism $A\to\operatorname{End}_k(V)$ for a finite-dimensional vector space $V$ over $k$.
Motivated by the failure of the converse, we replace $\operatorname{End}_k(V)$ by any finite-dimensional algebra $M$ and consider algebra homomorphisms $A\to M$ which we call \textbf{representations of $A$ with coefficient $M$}.
Such representations are also parameterized by an affine scheme, whose derived enhancement yields a natural generalization of the derived representation scheme.

We first prove that formal smoothness implies vanishing of higher representation homology for all finite-dimensional coefficients.

\begin{thm}\label{Thm}[\Cref{Thm_SmoothnessHigherVanishing}]
Let $M$ be a finite-dimensional $k$-algebra. If $A$ is a finitely generated formally smooth $k$-algebra, then $\mathrm{HR}_i(A,M)$ vanishes for all $i>0$.
\end{thm}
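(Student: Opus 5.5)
The plan is to follow the Berest–Felder–Ramadoss strategy, replacing $\operatorname{End}_k(V)$ by an arbitrary finite-dimensional algebra $M$.

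First, set up the derived functor. The functor $(-)_M\colon \mathrm{Alg}_k\to\mathrm{CAlg}_k$ sends $A$ to the commutative algebra $A_M$ representing $B\mapsto \operatorname{Hom}_{\mathrm{Alg}}(A, M\otimes B)$. It is left adjoint to $B\mapsto M\otimes B$. Concretely, fix a basis $e_1,\dots,e_n$ of $M$. For a free algebra $k\langle x_\alpha\rangle$, the algebra $A_M$ is the polynomial ring on variables $x_\alpha^{j}$, and relations are imposed by expanding in the structure constants of $M$. This left adjoint is compatible with the model structures on DG algebras, so it admits a left derived functor. We set $\operatorname{DRep}_M(A)=(QA)_M$ for a cofibrant resolution $QA\to A$, and $\mathrm{HR}_*(A,M)=H_*(\operatorname{DRep}_M(A))$.

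Next, choose a good resolution. Since $A$ is finitely generated and formally smooth, $\Omega^1A$ is a projective $A$-bimodule. We choose a presentation $A=F/I$ with $F=k\langle x_1,\dots,x_m\rangle$ finitely generated free. Formal smoothness gives a splitting of $F\to A$ up to the square-zero ideal $I/I^2$. As in BFR, the standard resolution built from $F$ then has its higher homology after applying $(-)_M$ governed by the conormal sequence
\[
0\to I/I^2\to A\otimes_F\Omega^1F\otimes_F A\to \Omega^1A\to 0.
\]
This sequence splits because $\Omega^1A$ is projective. Hence $I/I^2$ is a finitely generated projective $A$-bimodule, being a direct summand of the finitely generated free bimodule $A\otimes_F\Omega^1F\otimes_F A$.

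The key step is to identify the derived representation functor with a Koszul-type complex. We will use the fact that $(-)_M$ turns $A$-bimodules into $A_M$-modules: an $A$-bimodule $N$ goes to $N_M:=A_M\otimes_{A^e}(N\otimes M^*)$, in the appropriate twisted sense, generalizing $N\mapsto N_V$ from the matrix case. This functor is additive and takes the finitely generated projective bimodule $I/I^2$ to a finitely generated projective $A_M$-module. The map $F_M\to A_M$ is surjective, with kernel generated by the matrix-type entries of $I$, and its conormal module is a quotient of $(I/I^2)_M$. Using the splitting, we show that $(\Omega^1 A)_M\cong\Omega^1_{A_M}$, i.e. Kähler differentials commute with $(-)_M$. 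This extends to $M$ because that identity only uses the universal property of $A_M$ together with the derivation formula for $M\otimes B$. It follows that $\operatorname{Rep}_M(A)$ is smooth: it is cut out of affine space $F_M$ with projective conormal module of the right rank, so $A_M$ is a local complete intersection that is smooth. The Jacobi–Zariski sequence for $k\to F_M\to A_M$, compared with the derived functor, then shows that $\operatorname{DRep}_M(A)\to A_M$ is a quasi-isomorphism. Concretely, the cotangent complex $\mathbb L_{\operatorname{DRep}_M(A)}$ is $(\mathbb L_A)_M$, which is concentrated in degree $0$ because $\Omega^1A$ is projective. A connective cdga whose cotangent complex has no higher homology, and whose $H_0$ is smooth, has vanishing higher homology. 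This last implication follows by induction on degree via the Hurewicz-type connectivity comparison between $H_*(\mathbb L)$ and $H_*$ of the algebra.

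The main obstacle will be this final step: establishing $\mathbb L_{\operatorname{DRep}_M(A)}\simeq (\mathbb L_A)_M$ for general $M$. This means checking that the bimodule-to-module functor $(-)_M$ is exact enough on cofibrant DG bimodules. In the matrix case it is exact because $\operatorname{End}(V)$ is Morita equivalent to $k$. For general $M$ this fails, so I would instead work directly with the cofibrant model $QA$. There $\Omega^1(QA)$ is a semifree DG bimodule, and $(-)_M$ of a semifree bimodule is semifree over $(QA)_M$, since the basis elements simply get tensored with $M^*$. Exactness is then only needed for semifree objects, which is automatic. Finite generation of $A$ is used to ensure the resulting modules are finitely generated, so that the rank and projectivity arguments go through.
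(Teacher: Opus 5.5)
\textbf{The gap.} It sits in the decisive step, the claim that $\mathbb{L}_{\operatorname{DRep}_M(A)}\simeq(\mathbb{L}_A)_M$ is concentrated in degree $0$.

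Your semifree remark does correctly give $\mathbb{L}_{Q_M}=\Omega^1_{Q_M}\cong(\Omega^1Q)_M$ for $Q=QA$ semifree. But this is a module over $Q_M$, not over $A_M$. The equivalence $D(Q^e)\simeq D(A^e)$ identifies $\Omega^1Q$ with the projective bimodule $\Omega^1A$, but there is no corresponding $D(Q_M)\simeq D(A_M)$ until you already know that $Q_M\to A_M$ is a quasi-isomorphism.

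What projectivity of $\Omega^1A$ actually gives is weaker. In $D(Q^e)$, $\Omega^1Q$ is a retract of $Q\otimes W\otimes Q$ with $W$ finite-dimensional in degree $0$. Hence $\mathbb{L}_{Q_M}$ is a retract of the free module $Q_M\otimes W\otimes M^*$, and $\mathrm{H}_*(\mathbb{L}_{Q_M})\cong \mathrm{H}_*(Q_M)\otimes_{A_M}\Omega^1_{A_M}$. Its vanishing in positive degrees is therefore a statement about $\mathrm{HR}_{>0}(A,M)$ itself. Given smoothness of $A_M$, your own Hurewicz argument shows it is equivalent to the theorem, so the reduction is circular. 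The exactness of $(-)_M$ on cofibrant bimodules, which you flag as the obstacle, is not where the difficulty lies.

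\textbf{How to repair it.} The Hurewicz argument only needs the base change $\mathbb{L}_{Q_M}\otimes^{\mathbb{L}}_{Q_M}A_M$ to be discrete, and that you can prove.
\begin{enumerate}
\item The composite $Q\to M\otimes Q_M\to M\otimes A_M$ factors through $Q\to A$. So the adjunction defining $(-)_M$ on bimodules gives $(\Omega^1Q)_M\otimes_{Q_M}A_M\cong(A\otimes_Q\Omega^1Q\otimes_QA)_M$.
\item The bimodule $A\otimes_Q\Omega^1Q\otimes_QA$ is semifree and nonnegatively graded, and it is quasi-isomorphic to the projective bimodule $\Omega^1A$. Since $(-)_M$ is left Quillen, it sends this quasi-isomorphism of cofibrant objects to a quasi-isomorphism onto $(\Omega^1A)_M\cong\Omega^1_{A_M}$, placed in degree $0$.
\item $A_M$ is smooth: this follows from the lifting property, as in \Cref{Proposition_SmoothnessOfGeneralRepSchemes}, together with finite generation. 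The lci/rank count is not needed.
\item Hence $\mathbb{L}_{A_M/Q_M}\simeq0$, and the connectivity criterion \cite[Corollary 3.2.17]{Lu04} finishes the proof.
\end{enumerate}

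\textbf{Comparison with the paper.} Once repaired, your route is a genuinely global alternative to the paper's. The paper never computes $\mathbb{L}_{Q_M}$. Instead it shows $\mathbb{L}_{A_M/Q_M}\otimes^{\mathbb{L}}F_\rho\simeq0$ at every point, using $\pi_i(\operatorname{DRep}_M(A),\rho)\cong\mathrm{HH}^{i+1}(A,M_F)=0$ from \Cref{Proposition_HigherDRepAtPoint}. It then invokes Neeman's fiberwise detection lemma, which is where Noetherianity, and hence finite generation, enters. The base-change identity in step 1 would replace both the Hochschild computation and Neeman's lemma. As written, however, your proposal does not contain it.
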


The existing proof of the vanishing theorem (\cite[Theorem 21]{BFR14}) proceeds via a comparison theorem of Ciocan-Fontanine--Kapranov for smooth DG schemes.
This requires a semifree resolution whose underlying DG algebra is locally finite-dimensional, a hypothesis that is independent of the classical notion of formal smoothness.
While such resolutions are expected to exist in many practical situations, verifying the required finiteness conditions can be nontrivial.
Our proof replaces this step by a cotangent-complex argument and Neeman's fiberwise detection theorem, thereby recovering the vanishing theorem under the assumptions that $A$ is finitely generated and formally smooth.

In particular, the theorem applies to examples such as $U\mathfrak{g}$ for a semisimple Lie algebra $\mathfrak{g}$, where establishing the required local finiteness properties of semifree resolutions appears more difficult than verifying formal smoothness itself (see \Cref{Example_UniversalEnveloping}).

We also isolate the key argument underlying \Cref{Thm_SmoothnessHigherVanishing} as a comparison result in the appendix.
It recovers the comparison theorem of Ciocan-Fontanine--Kapranov and may be of separate interest in other contexts of derived algebraic geometry.

\

Although every finite-dimensional algebra embeds into a matrix algebra, derived representation schemes with non-matrix coefficients may detect singularities that are invisible in the classical matrix-valued version.
Indeed, the geometry of the representation scheme $\operatorname{Rep}_M(A)$ depends on the algebra structure of $M$.
By choosing appropriate coefficients, a direct computation shows (see \Cref{Example_QuantumPlane,Example_JordanPlane})
\begin{equation*}
\mathrm{HR}_1(k[x,y]_q,(\mathfrak{n}_3)_+)\neq0\quad\text{and}\quad\mathrm{HR}_1\left(\frac{k\langle x,y\rangle}{\langle yx-xy-x^2\rangle},(\mathfrak{n}_3)_+\right)\neq0.
\end{equation*}
\emph{Thus both the quantum plane (for $q$ not a root of unity) and the Jordan plane can be shown nonsmooth using representation homology with non-matrix coefficients}.

These examples demonstrate that the passage from matrix coefficients to arbitrary finite-dimensional coefficients is not merely a routine extension, but leads to genuinely stronger smoothness-detection results.
It is therefore natural to ask whether the converse holds for arbitrary finite-dimensional coefficients:

\begin{ques}[\Cref{ques}]
Let $A$ be a finitely generated Noetherian associative algebra over $k$ with sufficiently many finite-dimensional representations\footnote{Namely, for any finite-dimensional algebra $M$, there exist algebra homomorphisms $A\to M$. For instance, the Weyl algebra $A_1$ fails this condition.}.
If for \emph{every} finite-dimensional algebra $M$, the higher representation homology groups $\mathrm{HR}_i(A,M)$ vanish for all $i>0$, is $A$ formally smooth?
\end{ques}

The evidence for the question is not purely experimental.
We prove that it has an affirmative answer for finite-dimensional algebras.

\begin{prop}[\Cref{Prop_FinDimAlg}]
A finite-dimensional $k$-algebra $A$ over an algebraically closed field $k$ is formally smooth if and only if $\mathrm{HR}_i(A,M)$ vanishes for all $i>0$ and for every finite-dimensional algebra $M$.
\end{prop}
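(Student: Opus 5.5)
The forward direction follows from the main Theorem (\Cref{Thm_SmoothnessHigherVanishing}): a finite-dimensional algebra is finitely generated, so formal smoothness forces $\mathrm{HR}_i(A,M)=0$ for all $i>0$ and every finite-dimensional $M$. The work is in the converse, and the natural move is to take the coefficient to be $A$ itself. Representation homology with coefficient $M$ computes the homology of the derived representation scheme $\operatorname{DRep}_M(A)$, whose $\mathrm{HR}_0$ is $\mathcal{O}(\operatorname{Rep}_M(A))$. Vanishing of higher homology says $\operatorname{DRep}_M(A)$ is discrete, i.e.\ quasi-isomorphic to the classical scheme. I would combine this with a tangent/obstruction analysis at a suitable point.

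Concretely, I take $M=A$ and the identity homomorphism $\mathrm{id}\colon A\to A$, a $k$-point $x$ of $\operatorname{Rep}_A(A)$. The cotangent complex of $\operatorname{DRep}_M(A)$ at a point $\rho\colon A\to M$ should be computed, as in the cotangent-complex argument used for \Cref{Thm_SmoothnessHigherVanishing}, from a cofibrant resolution $QA\to A$ by the complex $\Omega^1(QA)\otimes_{QA^e}M$ (twisted by $\rho$), dualized. Hence its tangent cohomology is
\[
H^{i}\bigl(T_x\operatorname{DRep}_M(A)\bigr)\cong \operatorname{Ext}^{i+1}_{A^e}(A,M)=\mathrm{HH}^{i+1}(A,M),\qquad i\ge 1,
\]
with $M$ viewed as an $A$-bimodule through $\rho$. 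In degree $0$, the tangent space is the derivations $\operatorname{Der}(A,M)$.

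If $\operatorname{DRep}_M(A)$ is discrete, it equals the classical affine scheme $\operatorname{Rep}_M(A)$. For an ordinary scheme, the cotangent complex has $H^1$ of the tangent complex equal to $T^1$, which controls the singularities: it vanishes at $x$ if and only if $\operatorname{Rep}_M(A)$ is smooth there, and more generally $T^1$ need not vanish. So discreteness alone does not kill $\mathrm{HH}^2(A,M)$. The extra input I would use is the finite-dimensionality of $A$ and $k=\bar k$. By Wedderburn--Malcev, $A=S\oplus J$ with $S$ semisimple. Then $A$ is formally smooth if and only if $\mathrm{HH}^2(A,-)=0$, if and only if $A$ is hereditary, if and only if $J$ is projective as a bimodule-type condition. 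Equivalently, by Gabriel, $A$ is Morita equivalent to the path algebra of an acyclic quiver, modulo admissible relations that must vanish.

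So the strategy for the converse is the contrapositive. Suppose $A$ is not formally smooth. Then $A=kQ/I$ up to Morita equivalence, with $I\neq0$ admissible. I would then produce an explicit finite-dimensional $M$ with $\mathrm{HR}_1(A,M)\neq0$. The candidate is $M=kQ/R^{N}$ for large $N$ (with $R$ the arrow ideal), or a truncation where a minimal relation $r\in I$ produces a nonzero Koszul-type $1$-cycle in the semifree resolution $kQ\langle \xi_r\rangle$, $d\xi_r=r$. The cycle is built from a map $A\to M$ where $r$ maps to zero for ``degenerate'' reasons, in the same spirit as the $(\mathfrak n_3)_+$ computations for the quantum and Jordan planes. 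The Morita step needs care, since $\mathrm{HR}$ with arbitrary coefficients should be compatible with passing to the basic algebra. I would try to avoid it by working with $A$ directly: decompose via idempotents, use $M=A/J^N\times$ suitable truncations, and compute $\mathrm{HR}_1$ from the first two stages of a minimal resolution (generators $=$ arrows plus $S$, degree-one generators $=$ minimal relations).

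The main obstacle is showing that the class $\xi_r$ survives in $\mathrm{HR}_1$, i.e.\ is not a boundary after tensoring down to $M$. I would handle this by choosing $M$ graded (e.g.\ $kQ/R^{\ell+1}$, with $\ell$ the length of $r$) and the representation to be the generic one. A degree and weight count then shows that no degree-two element can bound the corresponding polynomial cycle, since $r$ is a minimal relation.
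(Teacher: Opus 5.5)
Your forward direction is fine. The converse has a genuine gap. The decisive claim, that the class of $\xi_r$ survives in $\mathrm{HR}_1(A,M)$ for $M=kQ/R^{\ell+1}$, is asserted rather than proved, and the setup around it does not hold together.

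First, $\mathrm{HR}_1(A,M)$ is the degree-one homology of the whole commutative DG algebra $(QA)_{M^*}$. It is not an invariant of a single ``generic'' representation. Its cycles are syzygies $\sum_j f_j\,d\xi^{(j)}=0$ among the coordinate equations of $\operatorname{Rep}_M(A)$, and its boundaries include the images of all degree-two generators of $QA$.

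Second, a semifree resolution over $k$ of a non-hereditary finite-dimensional algebra needs generators in degrees $\ge 2$, already for the idempotent relations and for overlaps among the relations. So $(QA)_{M^*}$ is not a Koszul complex, and the codimension argument of \Cref{Example_QuantumPlane} is unavailable.

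Third, the obvious map $kQ\to kQ/R^{\ell+1}$ does not factor through $A$, since $r\notin R^{\ell+1}$. The Morita reduction is also left open, even though $\mathrm{HR}$ is not Morita invariant. An unspecified ``degree and weight count'' addresses none of these points.

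The missing idea is that the route you abandoned does work once two things are added; this is the paper's argument.

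First, take the coefficient $M=A\ltimes N$ for an arbitrary finite-dimensional bimodule $N$, at the $k$-point $\rho_0\colon A\hookrightarrow A\ltimes N$. By \Cref{Proposition_HigherDRepAtPoint} and additivity, $\pi_i(\mathrm{DRep}_{A\ltimes N}(A),\rho_0)\cong\mathrm{HH}^{i+1}(A,A)\oplus\mathrm{HH}^{i+1}(A,N)$ for $i\ge1$. This is how arbitrary bimodules enter. With $M=A$ alone you only reach $\mathrm{HH}^2(A,A)$, which is not enough: the path algebra of $1\to2\to3$ modulo the composite of the two arrows is rigid but not hereditary.

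Second, your $T^1$ objection is correct in general, and the paper passes from $\mathrm{DRep}\simeq\mathrm{Rep}$ to $\pi_i(\mathrm{DRep}_{A\ltimes N}(A),\rho_0)=0$ without comment. It does not bite at $\rho_0$, however, because $\operatorname{Rep}_{A\ltimes N}(A)$ is smooth there:
\begin{itemize}
\item A $B$-point is a pair $(\phi,D)$, with $\phi\colon A\to A\otimes B$ an algebra map and $D\colon A\to N\otimes B$ a $\phi$-derivation.
\item On the open locus where $\det\phi\in B^\times$, extend $\phi$ $B$-linearly to $A\otimes B$. The assignment $(\phi,D)\mapsto(\phi,D\circ\phi^{-1})$ then identifies this locus with $\mathrm{Aut}(A)\times\mathbb{A}(\mathrm{Der}_k(A,N))$, and it contains $\rho_0$.
\item $\mathrm{Aut}(A)$ is a group scheme of finite type in characteristic $0$, hence smooth by Cartier's theorem.
\end{itemize}
So $\pi_i(\operatorname{Rep}_{A\ltimes N}(A),\rho_0)=0$ for $i\ge1$, and the quasi-isomorphism transfers this to $\mathrm{DRep}$. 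Hence $\mathrm{HH}^2(A,N)=0$ for every finite-dimensional $N$. This gives formal smoothness by \cite[Theorem 4.6]{AGV16}. Alternatively, argue directly: $\mathrm{pd}_{A^e}A\le1$ is detected by $\mathrm{Ext}^2_{A^e}(A,S)=0$ for simple bimodules $S$, and these are finite-dimensional.
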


Derived representation schemes with arbitrary finite-dimensional coefficients therefore provide a strictly stronger smoothness test than the classical matrix-valued derived representation schemes.
For finite-dimensional algebras, they completely characterize associative smoothness.

\subsection{Organization of the paper}

In \Cref{Sec_Pre}, we collect the necessary background on derived representation schemes and representation homology.
In \Cref{Sec_FormalSm}, we provide a new proof that if a finitely generated associative algebra is formally smooth, then its higher representation homology vanishes.
In \Cref{Sec_ExQ}, motivated by several illustrative examples, we  ask whether the generalized derived Kontsevich–Rosenberg principle provides a complete characterization of formal smoothness.
We also show that for finite-dimensional algebras, derived representation schemes with arbitrary finite-dimensional coefficients give a complete characterization of formal smoothness.
In the appendix, we isolate the key argument from the proof of the main theorem and establish a comparison result in the setting of derived algebraic geometry, which recovers a known result of Ciocan-Fontanine--Kapranov.

\subsection{Notations and conventions}

Throughout this paper, $k$ will denote a field of characteristic $0$. Unless specified, the tensor product $\otimes$ will be over $k$.
By a $k$-algebra $A$, we mean an \emph{associative} unital ring $A$ over $k$, not necessarily commutative.
We shall denote by ${\bf Alg}_k$ the category of $k$-algebras, and by ${\bf CommAlg}_k$ the full subcategory of all commutative $k$-algebras.
$A^e:=A^{\mathrm{op}}\otimes A$ is the enveloping algebra.
Similarly, a coalgebra is always coassociative, counital, and over $k$.

We will always use homological convention, namely the differentials are always of degree $-1$.
The categories of DG algebras and commutative DG algebras over $k$ are denoted ${\bf DGA}_k$ and ${\bf CDGA}_k$, respectively.
The full subcategory of ${\bf DGA}_k$ (resp. ${\bf CDGA}_k$) consisting of nonnegatively graded DG-algebras is denoted by ${\bf DGA}_k^{\geq0}$ (resp. ${\bf CDGA}_k^{\geq0}$).
As usual, ${\bf Alg}_k$ will be identified with the full subcategory of ${\bf DGA}_k$ consisting of DG algebras with a single nonzero component in degree $0$.
%For any category $\mathcal{C}$ and an object $X\in\mathcal{C}$, we denote by $\mathcal{C}/X$ the category over $X$, consisting of all morphisms $A\to X$, and $X/\mathcal{C}$ the category under $X$.
%Given any category $\mathcal{C}$, denote by $s\mathcal{C}$ the category of simplicial objects of $\mathcal{C}$, i.e. $s\mathcal{C}:=\mathrm{Funct}(\bf\Delta^{\mathrm{op}},\mathcal{C})$.
${\bf DGA}_k$ (resp. ${\bf CDGA}_k$ when $\operatorname{char}k=0$) carries a natural model structure.
For detailed results about simplicial sets and model categories, we refer to \cite{BKR13,H03,Q67}.

\subsection{Acknowledgements}

The ideas developed in this paper originated in the author's doctoral thesis.
The author is grateful to his advisor Yuri Berest for introducing him to this subject and for many helpful discussions.
The author would also like to express his gratitude to Isaac Goldberg and Ajay Ramadoss, for reading the first draft and offering suggestions.

\section{Preliminaries on derived representation schemes}\label{Sec_Pre}

Representation homology of associative algebras was introduced and studied in \cite{BFPRW17,BFR14,BKR13,BR16} and subsequent works, as a derived generalization of the representation scheme.
In this section, we recall the notion and basic properties of representation homology of algebras with arbitrary finite-dimensional coefficients.

\begin{de}[{\cite[Section A.1]{BFPRW17}}]
Given a finite-dimensional coalgebra $C$, the functor $\mathrm{Hom}_k(C,-):\bm{\mathrm{Alg}}_k\to\bm{\mathrm{Alg}}_k$ admits a left adjoint denoted by $\sqrt[C]{-}:\bm{\mathrm{Alg}}_k\to\bm{\mathrm{Alg}}_k$ following the notations in \cite{BKR13}.
When restricted to the subcategory $\bm{\mathrm{CommAlg}}_k$, the functor $\mathrm{Hom}_k(C,-):\bm{\mathrm{CommAlg}}_k\to\bm{\mathrm{Alg}}_k$ admits an adjunction
\begin{equation}\label{Eq_RepFunctorCoalgCoefficientAdj}
(-)_C:\bm{\mathrm{Alg}}_k\to\bm{\mathrm{CommAlg}}_k:\mathrm{Hom}_k(C,-)
\end{equation}
where $(-)_C:=(\sqrt[C]{-})_{\mathrm{ab}}$.
In this setting, we call $\operatorname{Spec}A_C$ the \textbf{representation scheme} of $A$ with coefficient $C$.

The adjunction (\ref{Eq_RepFunctorCoalgCoefficientAdj}) extends to an adjunction
\begin{equation*}
(-)_C:\bm{\mathrm{DGA}}_k\to\bm{\mathrm{CDGA}}_k:\mathrm{Hom}_k(C,-)
\end{equation*}
which is a Quillen pair.
Hence $(-)_C$ has a left derived functor $\mathbb{L}(-)_C$ computed by $\mathbb{L}(A)_C=(QA)_C$, where $QA\to A$ is a cofibrant replacement of $A$ in $\bm{\mathrm{DGA}}_k$.

Define $\mathrm{HR}_*(A,C):=\mathrm{H}_*(\mathbb{L}(A)_C)$ for a finite-dimensional coalgebra $C$, or $\mathrm{HR}_*(A,M):=\mathrm{H}_*(\mathbb{L}(A)_{M^*})$ for a finite-dimensional algebra $M$, and call the homology groups the \textbf{representation homology of $A$ with coefficient $C$} (or $M$).
We call the associated derived scheme \textbf{derived representation scheme}, and denote it by $\operatorname{DRep}(A,C)$ (or $\operatorname{DRep}(A,M)$).
\end{de}

\begin{proposition}[c.f. {\cite[Theorem 2.5]{BKR13}}]\label{Proposition_ZeroTrunction}
Given any finite-dimensional coalgebra $C$, then for any $A\in{\bf Alg}_k$, $\mathrm{HR}_0(A,C)\cong A_C$.
\end{proposition}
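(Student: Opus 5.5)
We compute $\mathrm{HR}_0(A,C)$ directly from a cofibrant resolution and reduce to right exactness of $(-)_C$. Choose a cofibrant replacement $QA\xrightarrow{\sim}A$ in ${\bf DGA}_k$ that is semifree and concentrated in nonnegative degrees. Concretely, $QA=k\langle V\rangle$ is a tensor algebra on a graded vector space $V=\bigoplus_{n\ge0}V_n$, with $V_0$ generating $A$ and $V_1$ mapping onto a generating set of relations. Then $\mathrm{H}_0(QA)=QA_0/d(QA_1)\cong A$. Here $QA_0=k\langle V_0\rangle$, and $d(QA_1)$ is the two-sided ideal generated by $d(V_1)$. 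So $A\cong k\langle V_0\rangle/\langle d(V_1)\rangle$ is a presentation of $A$.

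First I compute $(QA)_C$ explicitly. Since $(-)_C$ is a left adjoint, it preserves colimits; in particular it sends free algebras to free commutative algebras. For the free algebra $T(W)$ we have
\[
\mathrm{Hom}_{\bf CDGA}(T(W)_C,B)=\mathrm{Hom}_{\bf DGA}(T(W),\mathrm{Hom}_k(C,B))=\mathrm{Hom}(W,C^*\otimes B)=\mathrm{Hom}(W\otimes C,B),
\]
so $T(W)_C\cong \mathrm{Sym}(W\otimes C)$. This uses $\dim C<\infty$, and it works in the graded setting with the Koszul sign rule. Hence $(QA)_C=\mathrm{Sym}(V\otimes C)$ as a graded commutative algebra, nonnegatively graded, with degree-$0$ part $\mathrm{Sym}(V_0\otimes C)$ and degree-$1$ part $\mathrm{Sym}(V_0\otimes C)\otimes (V_1\otimes C)$.

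Next I identify the differential on low degrees. For $v\in V_1$ and $c\in C$, the differential $d(v\otimes c)$ is the image of $dv\in k\langle V_0\rangle$ under the map $k\langle V_0\rangle\to \mathrm{Hom}_k(C,\mathrm{Sym}(V_0\otimes C))$ adjoint to the identity, evaluated at $c$. This follows from naturality of the unit of the adjunction applied to the DG map $QA\to\mathrm{Hom}_k(C,(QA)_C)$. Consequently
\[
\mathrm{H}_0((QA)_C)=\mathrm{Sym}(V_0\otimes C)\big/\big(\text{ideal generated by } (dv)(c),\ v\in V_1, c\in C\big).
\]

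Finally I compare with $A_C$ by checking universal properties. A homomorphism from the right-hand side to a commutative algebra $B$ is the same as a linear map $V_0\to \mathrm{Hom}_k(C,B)$ whose induced algebra map $k\langle V_0\rangle\to \mathrm{Hom}_k(C,B)$ kills every $dv$. That is exactly an algebra map $A=k\langle V_0\rangle/\langle dV_1\rangle\to\mathrm{Hom}_k(C,B)$, i.e.\ a map $A_C\to B$. Hence $\mathrm{HR}_0(A,C)\cong A_C$. Alternatively, phrased functorially, $\mathrm{H}_0$ on nonnegatively graded objects is the coequalizer of $d$ and $0$ in degrees $1\to0$, and $(-)_C$, being a left adjoint, commutes with this coequalizer.

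The main obstacle is the step identifying the differential on $V_1\otimes C$. One must check carefully that evaluating $\mathrm{Hom}_k(C,-)$ against the coalgebra structure gives exactly the elements $(dv)(c)$. In particular, one needs the ideal generated by the $(dv)(c)$ to coincide with the ideal cut out by requiring $\langle dV_1\rangle$, which is two-sided, to map to zero. This holds because the convolution product on $\mathrm{Hom}_k(C,B)$ makes $(x\,dv\,y)(c)$ a combination of products of the form $x(c_{(1)})\,(dv)(c_{(2)})\,y(c_{(3)})$.
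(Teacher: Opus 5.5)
Your proof is correct. The paper gives no argument for this statement and only cites \cite[Theorem 2.5]{BKR13}. The standard proof there is formal rather than computational. On nonnegatively graded objects, $\mathrm{H}_0\colon{\bf DGA}_k^{\geq0}\to{\bf Alg}_k$ and $\mathrm{H}_0\colon{\bf CDGA}_k^{\geq0}\to{\bf CommAlg}_k$ are left adjoint to the inclusions of objects concentrated in degree $0$. These inclusions commute with $\mathrm{Hom}_k(C,-)$, so the left adjoints commute as well, giving $\mathrm{H}_0((R)_C)\cong(\mathrm{H}_0R)_C$ for every $R\in{\bf DGA}_k^{\geq0}$. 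One then takes $R=QA$ with $\mathrm{H}_0(QA)\cong A$.

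Your route instead unpacks this for a semifree model: you compute $(QA)_C\cong\mathrm{Sym}(V\otimes C)$, read off the differential on $V_1\otimes C$ from the unit, and check the universal property of $A_C$ by hand. The formal argument needs neither a semifree resolution nor the identification of the underlying graded algebra of $(QA)_C$. In your version that identification tacitly uses that $(-)_C$ commutes with forgetting differentials, which is true but an extra check. In return, your route yields an explicit presentation of $\mathrm{HR}_0(A,C)$ with generators $V_0\otimes C$ and relations $(dv)(c)$. This is exactly the form exploited in \Cref{Example_FreeAlg} and in the Koszul complexes of \Cref{Sec_ExQ}.

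Two small remarks. First, the step you flag as the main obstacle is immediate: the adjoint map $k\langle V_0\rangle\to\mathrm{Hom}_k(C,B)$ is an algebra homomorphism, so it kills the two-sided ideal $\langle dV_1\rangle$ as soon as it kills each $dv$, and no convolution formula is needed. Second, your closing ``coequalizer'' remark is imprecise as stated, because $R_1$ is not an algebra; its correct form is precisely the adjunction argument above.
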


\begin{definition}\label{Def_LocallyFinite}
We say a graded vector space $V_\bullet$ is \textbf{locally of finite dimension} if $V_\bullet$ is concentrated in nonnegative degrees and $\dim V_i<\infty$ for all $i\geq0$.
We say a DG algebra $A$ is \textbf{free of locally finite dimension} if $A=T_k(V_\bullet)$ as graded algebras where $V_\bullet$ is locally of finite dimension.
\end{definition}

\begin{example}\label{Example_FreeAlg}
Let $V$ be a finite-dimensional vector space and let $A=T_k(V)$ be the free $k$-algebra generated by a set $X$ of basis elements of $V$.
By the universal property, for any commutative algebra $B$, we have a canonical isomorphism
\begin{align*}
\operatorname{Hom}_{{\bf CommAlg}_k}((T_k(V))_C,B)&\cong\operatorname{Hom}_{{\bf Alg}_k}(k\langle X\rangle,\operatorname{Hom}_k(C,B))\\
&=\prod_{x\in X}\operatorname{Hom}_k(C,B).
\end{align*}
Note that $\operatorname{Sym}_k(V\otimes C)$ satisfies the property
\begin{equation*}
\operatorname{Hom}_{{\bf CommAlg}_k}(\operatorname{Sym}_k(V\otimes C),B)=\prod_{x\in X}\operatorname{Hom}_k(C,B),
\end{equation*}
so by Yoneda lemma
\begin{equation*}
(T_k(V))_C=\operatorname{Sym}_k(V\otimes C)=k[X].
\end{equation*}
The argument easily generalizes to DG settings when $V$ is locally of finite dimension (\Cref{Def_LocallyFinite}).
\end{example}

We also collect some definitions of DG-manifolds in order to state \Cref{Proposition_HigherDRepAtPoint}.

\begin{definition}[{\cite[Section 2.2 and Section 2.5]{CK01}}]
A \textbf{DG scheme} is a pair $(X_0,\mathcal{O}_{X,\bullet})$, where $(X_0,\mathcal{O}_{X,0})$ is an ordinary scheme and $\mathcal{O}_{X,\bullet}$ is a sheaf of ($\mathbb{Z}_{\geq0}$-graded) commutative DG-algebras on $X_0$ such that each $\mathcal{O}_{X,i}$ is quasi-coherent over $\mathcal{O}_{X_0}$.
A DG scheme $X=(X_0,\mathcal{O}_{X,\bullet})$ is called \textbf{affine} if $X_0$ is affine.
In the affine case, we write $\pi_0(X):=\mathrm{Spec}~\mathrm{H}_0(\mathcal{O}_{X,\bullet})$ and identify $\pi_0(X)$ as a closed subscheme of $X_0$.

Given a DG scheme $X$ over $k$, a field extension $F\supseteq k$ and a closed $F$-point $x\in X_0$, we define the \textbf{DG tangent space} $(T_xX)_\bullet$ at $x$ to be the derivation complex
\begin{equation*}
(T_xX)_\bullet:=\underline{\mathrm{Hom}}_{\mathcal{O}_X}(\Omega^1_{\mathrm{comm}}(\mathcal{O}_{X}),\mathcal{O}_X)\otimes_{\mathcal{O}_X}F_x\cong\underline{\mathrm{Der}}_k(\mathcal{O}_{X},F_x),
\end{equation*}
where $F_x:=\mathcal{O}_{X,\bullet}/\mathfrak{m}_x$ as an $\mathcal{O}_{X,\bullet}$-module.
The homology groups of this complex are denoted $\pi_i(X,x):=\mathrm{H}_i(T_xX)$ and called the \textbf{derived tangent spaces} of $X$ at $x$.
\end{definition}

The following \Cref{Proposition_HigherDRepAtPoint} is a slight generalization of \cite[Proposition 7]{BFR14}, with a very similar proof.
This proposition will be an important step in the proof of \Cref{Thm_SmoothnessHigherVanishing}.

\begin{proposition}[c.f. {\cite[Proposition 7]{BFR14}}]\label{Proposition_HigherDRepAtPoint}
Let $F\supseteq k$ be a field extension of $k$.
Let $M$ be a finite-dimensional $k$-algebra and denote by $M_F$ the finite-dimensional algebra $M\otimes_kF$ over $F$.
Let $\rho:A\to M_F$ be a fixed representation of $A$ with coefficient $M_F$.
Then there is a canonical isomorphism
\begin{equation*}
\pi_i(\mathrm{DRep}_M(A),\rho)=\left\{\begin{matrix}\mathrm{Der}_k(A,M_F)&i=0\\ \mathrm{HH}^{i+1}(A,M_F)&i\geq1
\end{matrix}\right.
\end{equation*}
where $\mathrm{HH}^{i+1}(A,M_F)$ denotes the Hochschild cohomology and $M_F$ is regarded as a bimodule via $\rho:A\to M_F$.
\end{proposition}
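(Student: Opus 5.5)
We plan to follow the strategy of \cite[Proposition 7]{BFR14}: replace $A$ by a semifree resolution, compute the DG tangent space of the resulting DG scheme at $\rho$ directly as a derivation complex, and identify its homology with derived derivations, i.e.\ shifted Hochschild cohomology.

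\textbf{Step 1 (model and tangent complex).} Choose a cofibrant (semifree) resolution $R=(T_k(V_\bullet),d)\xrightarrow{\sim}A$ in ${\bf DGA}_k^{\geq0}$. Then $\operatorname{DRep}_M(A)$ is modelled by $R_{M^*}$, and $\rho$ determines an $F$-point of $\operatorname{Spec}(R_{M^*})_0$ by composing $R_0\to A\to M_F$. By the adjunction (\ref{Eq_RepFunctorCoalgCoefficientAdj}) in its DG form, together with the description of $(T_kV)_C$ in \Cref{Example_FreeAlg}, commutative derivations $(R_{M^*})\to F_\rho$ of degree $n$ correspond bijectively to associative derivations $R\to M_F$ of degree $n$. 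Here $M_F$ is viewed as an $R$-bimodule through $R\to A\xrightarrow{\rho}M_F$. The key point is that $\mathrm{Hom}_k(M^*,F)=M_F$, and a derivation on $\operatorname{Sym}(V\otimes M^*)$ is determined by its values on the generators $V\otimes M^*$, exactly as an associative derivation on $T(V)$ is determined on $V$. Checking compatibility with the differentials gives an isomorphism of complexes
\[
(T_\rho\operatorname{DRep}_M(A))_\bullet\cong\underline{\mathrm{Der}}_k(R,M_F).
\]
This mirrors the matrix case, with $\mathrm{End}(V)$ replaced by $M$.

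\textbf{Step 2 (homology of derivations).} For semifree $R$, $\underline{\mathrm{Der}}_k(R,N)\cong\underline{\mathrm{Hom}}_{R^e}(\Omega^1R,N)$, and $\Omega^1R$ is a cofibrant $R^e$-module. Since $R\to A$ is a quasi-isomorphism, $\Omega^1R\otimes_{R^e}A^e$ models the cotangent complex $\mathbb{L}\Omega^1A$. There is an exact triangle
\[
\mathbb{L}\Omega^1A\to A\otimes A\to A.
\]
Applying $\mathrm{RHom}_{A^e}(-,M_F)$ gives $\mathrm{H}_{-n}\underline{\mathrm{Der}}(R,M_F)\cong\mathrm{Ext}^n_{A^e}(\mathbb{L}\Omega^1A,M_F)$. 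The associated long exact sequence identifies $\mathrm{Ext}^0$ with $\mathrm{Der}_k(A,M_F)$ and $\mathrm{Ext}^n$ with $\mathrm{HH}^{n+1}(A,M_F)$ for $n\geq1$, because $\mathrm{Ext}^n_{A^e}(A^e,M_F)=0$ for $n>0$.

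\textbf{Main obstacle: grading.} Under the homological convention, derivations of positive homological degree $i$ lower degree by $i$, so $\pi_i$ of the tangent space should correspond to $\mathrm{Ext}^{i}$ under the sign conventions of \cite{CK01,BFR14}. We will match this with $\pi_i=\mathrm{H}_i(T_xX)$, following the indexing of the original proof. The remaining care is ensuring the identification in Step 1 is compatible with differentials; this uses naturality of the adjunction in $M^*$, or $C$ in general.
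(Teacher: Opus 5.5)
Your proposal is correct and is essentially the argument the paper intends: the paper gives no separate proof and defers to \cite[Proposition 7]{BFR14}, whose argument is the one you give. You identify the tangent complex of $(R)_{M^*}$ at $\rho$ with $\underline{\mathrm{Der}}_k(R,M_F)$ by adjunction, and then compute its homology as $\mathrm{Ext}^{*}_{A^e}(\mathbb{L}\Omega^1A,M_F)$ via the triangle $\mathbb{L}\Omega^1A\to A\otimes A\to A$.

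Your caution about grading is justified and harmless. Under homological conventions the derivation complex lives in nonpositive degrees, so $\pi_i$ must be read as $\mathrm{H}_{-i}(T_\rho X)$, or equivalently the tangent complex must be regraded. This is the convention the paper and \cite{BFR14} implicitly use.
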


\begin{remark}
We emphasize that the notation $\pi_i(X,x)$ is \emph{not defined} as the (derived) fiber $x^*\mathcal{O}_{X,\bullet}:=\mathcal{O}_{X,\bullet}\otimes_{\mathcal{O}_{X,0}}^{\mathbb{L}}F_x$.
By the universal property of the cotangent complex (for instance, \cite[Proposition 1.2.1.2]{TV08} in the affine case), there is a natural equivalence of derived mapping spaces
\begin{equation*}
\underline{\mathrm{Der}}_k(\mathcal{O}_{X},F_x)\simeq 
\operatorname{Map}_{\mathbf{Mod}_{\mathcal{O}_{X}}}(\mathbb{L}_{X/k},F_x),
\end{equation*}
and therefore
\begin{equation}\label{Eq_MappingSpTranslation}
\pi_i(X,x)\cong\mathrm{H}_i(\operatorname{Map}_{\mathbf{Mod}_{\mathcal{O}_{X}}}(\mathbb{L}_{X/k},F_x)).
\end{equation}
In this sense, $\pi_i(X,x)$ should be viewed as the algebraic invariant associated with the derived tangent space, although it is highly related to the derived fiber at the point $x$.
\end{remark}

\section{Formal smoothness test}\label{Sec_FormalSm}

In this section, we establish a formal smoothness test using derived representation schemes with finite-dimensional coefficients.
First, we recall the definition of noncommutative smoothness.

\begin{definition}[{\cite[Proposition 3.3]{CQ95},\cite{KR00}}]\label{Def_Smoothness}
An associative algebra $A$ is called \textbf{formally smooth} (or quasi-free) if any of the following equivalent conditions holds:
\begin{enumerate}
\item $A$ has cohomological dimension $\leq1$ with respect to Hochschild cohomology.
\item The universal bimodule of derivations $\Omega^1_{\mathrm{nc}}(A)$ is a projective bimodule.
\item $A$ satisfies the lifting property with respect to nilpotent extensions in $\bf{Alg}_k$, namely for every algebra homomorphism $A\to B/I$ where $I\subseteq B$ is a nilpotent ideal, there is an algebra homomorphism $\tilde{f}:A\to B$ inducing $f$.
\end{enumerate}
We say that $A$ is \textbf{smooth} if it is formally smooth and finitely generated.
\end{definition}

The next proposition shows that formal smoothness is reflected by derived representation schemes with arbitrary coefficients, in a manner analogous to the classical result for representation schemes with matrix coefficients.

\begin{proposition}[c.f. {\cite[Proposition 19.1.4]{G05}}]\label{Proposition_SmoothnessOfGeneralRepSchemes}
If $A$ is a formally smooth associative algebra, then $(A)_{M^*}$ is a formally smooth commutative algebra for any finite-dimensional algebra $M$.
\end{proposition}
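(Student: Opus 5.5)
We verify the infinitesimal lifting criterion for commutative algebras directly, using the adjunction \eqref{Eq_RepFunctorCoalgCoefficientAdj} with $C=M^*$. Note that $\operatorname{Hom}_k(M^*,B)\cong M\otimes_k B$ as algebras for any commutative $B$, since $M$ is finite-dimensional. Let $B$ be a commutative $k$-algebra, $I\subseteq B$ a nilpotent ideal, and $g:(A)_{M^*}\to B/I$ a homomorphism of commutative algebras. We must produce a lift $\tilde g:(A)_{M^*}\to B$.

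First, by adjunction, $g$ corresponds to an algebra homomorphism $f:A\to \operatorname{Hom}_k(M^*,B/I)\cong M\otimes_k B/I$. Next, the surjection $B\to B/I$ induces a surjection $M\otimes_k B\to M\otimes_k B/I$ with kernel $M\otimes_k I$, by exactness of $M\otimes_k-$. This kernel is a two-sided ideal. It is nilpotent: if $I^n=0$, then a product of $n$ elements of $M\otimes I$ lies in $M\otimes I^n=0$, because $B$ is commutative and central in $M\otimes B$, so the $I$-components multiply into $I^n$. Hence $M\otimes B\to M\otimes B/I$ is a nilpotent extension in ${\bf Alg}_k$.

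By condition (3) of \Cref{Def_Smoothness}, $f$ lifts to $\tilde f:A\to M\otimes_k B\cong\operatorname{Hom}_k(M^*,B)$. Its adjoint is $\tilde g:(A)_{M^*}\to B$. Naturality of the adjunction in the commutative variable, applied to the quotient map $B\to B/I$, shows that composing $\tilde g$ with $B\to B/I$ gives the adjoint of the composite $A\to M\otimes B\to M\otimes B/I$. That composite is $f$, so the composition recovers $g$. Thus $(A)_{M^*}$ satisfies the lifting property for nilpotent extensions, i.e. it is formally smooth as a commutative algebra. Lifting against square-zero extensions would suffice, and nilpotent extensions follow by iteration.

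The only point requiring care is identifying $\operatorname{Hom}_k(M^*,B)$ with $M\otimes B$ as algebras, so that it is natural in $B$ and the kernel is recognized as the nilpotent ideal $M\otimes I$. This is straightforward because $\dim M<\infty$, and the convolution product corresponds to the tensor product algebra structure. I expect no genuine obstacle beyond this bookkeeping.
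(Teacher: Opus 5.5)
Your proof is correct and follows the paper's argument: you translate the lifting problem through the adjunction into lifting $A\to M\otimes B/I$ along $M\otimes B\to M\otimes B/I$, then use the formal smoothness of $A$ together with the nilpotency of the kernel $M\otimes I$. The only difference is that you prove nilpotency directly from $(m\otimes b)(m'\otimes b')=mm'\otimes bb'$, whereas the paper's Lemma embeds $M$ into $\operatorname{End}_k(M)$ and reduces to matrices with entries in $I$; your route is shorter and does not even use commutativity of $B$.
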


We first establish the following lemma, which will be used in the proof of \Cref{Proposition_SmoothnessOfGeneralRepSchemes}.

\begin{lemma}\label{Lemma_PushDownNilpIdeals}
Let $M$ be a finite-dimensional algebra.
We regard it as a functor $\bm{\mathrm{CommAlg}}_k\to\bm{\mathrm{Alg}}_k$ defined by $B\mapsto M\otimes B$.
For any commutative $k$-algebra $B$ and a nilpotent ideal $I$, the two-sided ideal $M(I)$ is nilpotent, where
\begin{equation*}
M(I):=\mathrm{Ker}(M(B)\to M(B/I)).
\end{equation*}
\end{lemma}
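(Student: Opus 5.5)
The plan is to identify $M(I)$ with $M\otimes I$ and then check nilpotency directly on products of simple tensors. Since $k$ is a field, $M$ is flat over $k$, so tensoring the short exact sequence $0\to I\to B\to B/I\to 0$ with $M$ stays exact:
\begin{equation*}
0\to M\otimes I\to M\otimes B\to M\otimes B/I\to 0 .
\end{equation*}
Hence $M(I)=\mathrm{Ker}(M\otimes B\to M\otimes B/I)$ is exactly the image of $M\otimes I$ in $M\otimes B$. In other words, it is spanned by the elements $m\otimes a$ with $m\in M$ and $a\in I$. It is a two-sided ideal because it is the kernel of an algebra homomorphism.

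Now let $n$ be such that $I^n=0$. Take any $n$ elements $x_1,\dots,x_n\in M(I)$. Writing each as a finite sum of simple tensors and expanding by multilinearity, the product $x_1\cdots x_n$ becomes a sum of terms of the form
\begin{equation*}
(m_1\otimes a_1)\cdots(m_n\otimes a_n)=(m_1\cdots m_n)\otimes(a_1\cdots a_n),
\end{equation*}
with $a_j\in I$. This is where commutativity of $B$ is used, and in fact only the product structure of the tensor product algebra $M\otimes B$ is needed. Since $a_1\cdots a_n\in I^n=0$, every such term vanishes, so $M(I)^n=0$.

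No step presents a real obstacle. The only point needing care is the identification $M(I)=M\otimes I$, which relies on exactness of $M\otimes_k-$; this is automatic over a field. Finite-dimensionality of $M$ is not even needed for the argument.
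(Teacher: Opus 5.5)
Your proof is correct, but it takes a different route from the paper. The paper reduces to the matrix case. It embeds $M$ into $\mathrm{End}_k(M)\cong\mathrm{M}_d(k)$ via the left regular representation, which gives an injection of non-unital algebras $M(I)\hookrightarrow\mathrm{M}_d(I)$. It then observes that a product of $n$ matrices with entries in $I$ has entries in $I^n$. You instead work directly in $M\otimes B$: the kernel is spanned by the elements $m\otimes a$ with $a\in I$, and the multiplication rule of the tensor product algebra kills every $n$-fold product.

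Your argument is shorter and more general. As you note, it does not need $\dim M<\infty$, whereas the paper needs finite-dimensionality so that $\mathrm{End}_k(M)$ is a matrix algebra. It also does not need $B$ to be commutative. The identity $(m_1\otimes a_1)\cdots(m_n\otimes a_n)=(m_1\cdots m_n)\otimes(a_1\cdots a_n)$ holds in the tensor product of any two algebras, so your remark that commutativity is ``used'' there is inaccurate, though harmless. Similarly, right exactness of $M\otimes-$ already identifies the kernel with the image of $M\otimes I$. Flatness only adds injectivity of $M\otimes I\to M\otimes B$, which you never use.

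The paper's route has the advantage of making the link with the classical matrix-valued setting explicit, in line with its view of finite-dimensional coefficients as subalgebras of matrix algebras. Both arguments give the same nilpotency index: $I^n=0$ implies $M(I)^n=0$.
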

\begin{proof}
First, every finite-dimensional algebra $M$ can be embedded into a matrix algebra $\mathrm{End}_k(M)$ (because every element $a\in M$ is a $k$-linear map $x\mapsto ax$), hence we have a diagram
\begin{center}
\begin{tikzcd}
0\arrow{r}{}&M(I)\arrow{r}{}\arrow[hook]{d}{}&M(B)\arrow{r}{}\arrow[hook]{d}{}&M(B/I)\arrow{r}{}\arrow[hook]{d}{}&0\\
0\arrow{r}{}&\mathrm{End}_k(M)(I)\arrow{r}{}&\mathrm{End}_k(M)(B)\arrow{r}{}&\mathrm{End}_k(M)(B/I)\arrow{r}{}&0
\end{tikzcd}
\end{center}
The map $M(I)\to\mathrm{End}_k(M)(I)$ is injective by the 5-lemma, so it suffices to show the result for $M=\mathrm{M}_d$, the matrix algebra.

This follows immediately from the structure of matrix multiplication, because
\begin{equation*}
\mathrm{M}_d(I)=\{d\times d\text{ matrices consisting of entries in }I\}.
\end{equation*}
\end{proof}

\begin{proof}[Proof of \Cref{Proposition_SmoothnessOfGeneralRepSchemes}]
To show that $(A)_{M^*}$ is formally smooth, it suffices to verify that for any nilpotent ideal $I$ of an associative algebra $B$, the map
\begin{equation}\label{Eq_SmoothnessCheck1}
\mathrm{Hom}_{\bm{\mathrm{CommAlg}}_k}((A)_{M^*},B)\to\mathrm{Hom}_{\bm{\mathrm{CommAlg}}_k}((A)_{M^*},B/I)
\end{equation}
is surjective.
Via adjunction (\ref{Eq_RepFunctorCoalgCoefficientAdj}), homomorphism (\ref{Eq_SmoothnessCheck1}) is identified with
\begin{equation}\label{Eq_SmoothnessCheck2}
\mathrm{Hom}_{\bm{\mathrm{Alg}}_k}(A,M(B))\to\mathrm{Hom}_{\bm{\mathrm{Alg}}_k}(A,M(B)/M(I)),
\end{equation}
and its surjectivity follows from the formal smoothness of $A$ and \Cref{Lemma_PushDownNilpIdeals}.
\end{proof}

Now we come to the theorem:

\begin{theorem}[{c.f. \cite[Theorem 21]{BFR14}}]\label{Thm_SmoothnessHigherVanishing}
Let $M$ be a finite-dimensional $k$-algebra.
Given an associative $k$-algebra $A$, if $A$ is formally smooth and finitely generated, then
\begin{equation*}
\mathrm{HR}_i(A,M)=0,\;\;\;i>0.
\end{equation*}
\end{theorem}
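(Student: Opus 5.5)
We plan to compare the derived representation scheme $X:=\mathrm{DRep}_M(A)$ with its classical truncation $\pi_0(X)=\operatorname{Spec}A_{M^*}$ through their cotangent complexes, and then use fiberwise detection to show that the comparison map is an equivalence.

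\emph{Step 1: a finite-type model.} Since $A$ is finitely generated, we choose a cofibrant resolution $QA=T_k(V_\bullet)\to A$ with $V_0$ finite-dimensional. We do not require finiteness in higher degrees. By \Cref{Example_FreeAlg}, $\mathcal{O}_X:=(QA)_{M^*}=\operatorname{Sym}_k(V_\bullet\otimes M^*)$ is a semifree commutative DG algebra whose degree-zero part $X_0$ is a polynomial ring in finitely many variables. By \Cref{Proposition_ZeroTrunction}, $\mathrm{H}_0(\mathcal{O}_X)=A_{M^*}$, and by \Cref{Proposition_SmoothnessOfGeneralRepSchemes} this algebra is formally smooth. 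Being finitely generated, it is smooth, so $\mathbb{L}_{A_{M^*}/k}\simeq\Omega^1_{A_{M^*}}$ is a finitely generated projective module concentrated in degree $0$.

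\emph{Step 2: reduction to a cotangent complex statement.} Let $\tau:\mathcal{O}_X\to \mathcal{O}_{\pi_0X}:=A_{M^*}$ be the truncation map, and let $\mathbb{L}_\tau$ be its relative cotangent complex, with cofiber sequence $\tau^*\mathbb{L}_{X/k}\to\mathbb{L}_{\pi_0X/k}\to\mathbb{L}_\tau$. A map of connective CDGAs that is an isomorphism on $\mathrm{H}_0$ is an equivalence if and only if its relative cotangent complex vanishes. This follows from the Hurewicz-type connectivity estimate: if $\tau$ is $n$-connected, then $\mathbb{L}_\tau$ is $(n+1)$-connected with $\mathrm{H}_{n+1}\mathbb{L}_\tau\cong\mathrm{H}_{n+1}$ of the fiber. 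It therefore suffices to show $\mathbb{L}_\tau\simeq 0$, which is an $A_{M^*}$-module with $\mathrm{H}_0=\mathrm{H}_1=0$.

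\emph{Step 3: pointwise detection.} For each point $\rho$ of $\operatorname{Spec}A_{M^*}$ with residue field $F$, we base change the cofiber sequence to $F$ and dualize using \eqref{Eq_MappingSpTranslation}. The dual of $\rho^*\mathbb{L}_{\pi_0X}$ is $\mathrm{Der}_k(A_{M^*},F)$, concentrated in degree $0$. By \Cref{Proposition_HigherDRepAtPoint}, the dual of $\rho^*\mathbb{L}_{X}$ has homology $\mathrm{Der}_k(A,M_F)$ in degree $0$ and $\mathrm{HH}^{i+1}(A,M_F)$ in degree $i\ge1$. These higher groups vanish because $A$ is formally smooth (\Cref{Def_Smoothness}(1)). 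In degree $0$ the map is an isomorphism, since both sides classify first-order deformations of $\rho$: we have $\mathrm{Der}_k(A_{M^*},F)=\operatorname{Hom}(A,M(F[\varepsilon]))_\rho=\mathrm{Der}_k(A,M_F)$ by adjunction. Hence $\operatorname{Map}(\rho^*\mathbb{L}_\tau,F)\simeq0$.

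Because each $\mathrm{H}_i$ of $\mathcal{O}_X$ and of $\mathbb{L}$ is only a module over a Noetherian ring, without finiteness, we first deduce $F\otimes^{\mathbb{L}}\mathbb{L}_\tau\simeq 0$ over a field (dual vanishing suffices there). We then apply Neeman's theorem: in $D(R)$ for a Noetherian ring $R=A_{M^*}$, a complex $N$ with $N\otimes^{\mathbb{L}}_R k(\mathfrak p)=0$ for all primes $\mathfrak p$ is zero. This requires passing from points of $X$ to $R$-modules, using $\mathbb{L}_\tau$ as an $\mathcal{O}_{\pi_0X}$-module and residue fields at all primes, not only closed points; this is why we allow arbitrary extensions $F$ in \Cref{Proposition_HigherDRepAtPoint}. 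We conclude $\mathbb{L}_\tau\simeq0$, so $\tau$ is a quasi-isomorphism and $\mathrm{HR}_i(A,M)=0$ for $i>0$.

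\emph{Main obstacle.} The hard part is Step 3: matching the derived tangent complex computed in \Cref{Proposition_HigherDRepAtPoint}, which is defined on $\mathcal{O}_X$, with $\rho^*\mathbb{L}_X$ pulled back along $\tau$. We also need to control unbounded-above duals without local finiteness. We would first try to avoid dualization altogether by working with the $\mathrm{Ext}$ description $\mathrm{Ext}^{j}_{R}(\mathbb{L}_\tau,k(\mathfrak p))$ and a lowest-nonvanishing-degree argument: the lowest homology of $\mathbb{L}_\tau$ is detected by $\operatorname{Hom}$ into residue fields via Nakayama, after localizing at a prime in its support.
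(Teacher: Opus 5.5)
Your proposal is correct and is essentially the paper's proof. Both use a semifree model with $Q_0$ finitely generated and the cofiber sequence for $(Q)_{M^*}\to A_{M^*}$. Both obtain fiberwise vanishing of the relative cotangent complex by dualizing over each residue field and using the Hochschild description of the derived tangent spaces. Both then conclude with Neeman's detection lemma over the Noetherian ring $A_{M^*}$ and Lurie's connectivity criterion. You also spell out the degree-$0$ comparison (via smoothness of $A_{M^*}$ and the first-order-deformation adjunction), which the paper leaves implicit.

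The worry in your last paragraph is unnecessary: dualizing over a field loses no information, and Neeman's lemma needs no finiteness. The Nakayama-based fallback should be dropped, since you have no a priori finite generation for the homology of $\mathbb{L}_\tau$. For example, a divisible torsion module such as $\mathbb{Z}[1/p]/\mathbb{Z}$ over $\mathbb{Z}_{(p)}$ is invisible to both $\operatorname{Hom}$ and $\otimes$ into every residue field in its lowest degree.
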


\begin{proof}[Proof of \Cref{Thm_SmoothnessHigherVanishing}]
Since $A$ is finitely generated, $A$ has a semifree resolution $q:Q\xrightarrow{\sim}A$ in ${\bf DGA}_k^{\geq0}$ such that $Q_0$ is a finitely generated free algebra.
Applying the functor $(-)_{M^*}$ to the resolution, we have a map
\begin{equation*}
(q)_{M^*}:(Q)_{M^*}\to(A)_{M^*}
\end{equation*}
of DG commutative algebras (by viewing $(A)_{M^*}$ as a DG algebra concentrated in degree $0$).

By \Cref{Proposition_HigherDRepAtPoint}, for any closed $F$-point $\rho\in\mathrm{Rep}_M(A)$,
\begin{equation*}
\pi_i(\mathrm{DRep}_M(A),\rho)\cong\mathrm{HH}^{i+1}(A,M_F),\;i\geq1.
\end{equation*}
Since $A$ is formally smooth, $\pi_i(\mathrm{DRep}_M(A),\rho)=0$ for $i\geq1$.
Consequently, $(q)_{M^*}$ induces isomorphisms $\tilde{f}_{\rho,i}:\pi_i(\mathrm{Rep}_M(A),\rho)\xrightarrow{\cong}\pi_i(\mathrm{DRep}_M(A),\rho)$ at each point $\rho$ and for each $i$.

Denote by $F_\rho$ the $(A)_{M^*}$-module (hence $(Q)_{M^*}$-module) corresponding to the point $\rho\in\mathrm{Rep}_M(A)$, with underlying field $F$.
By \Cref{Eq_MappingSpTranslation},
\begin{align*}
\pi_i(\mathrm{DRep}_M(A),\rho)&\cong\mathrm{H}_i(\operatorname{Map}_{\mathbf{Com}_{(Q)_{M^*}}}(\mathbb{L}_{(Q)_{M^*}},F_\rho))\\
&\cong\mathrm{H}_i(\operatorname{Map}_{\mathbf{Com}_{F_\rho}}(\mathbb{L}_{(Q)_{M^*}}\otimes^\mathbb{L}_{(Q)_{M^*}}F_\rho,F_\rho))\\
&\cong\mathrm{H}_i(\operatorname{Map}_{\mathbf{Com}_{F_\rho}}(\mathbb{L}_{(Q)_{M^*}}\otimes^\mathbb{L}_{(Q)_{M^*}}(A)_{M^*}\otimes^\mathbb{L}_{(A)_{M^*}}F_\rho,F_\rho))
\end{align*}
and similarly
\begin{equation*}
\pi_i(\mathrm{Rep}_M(A),\rho)\cong\mathrm{H}_i(\operatorname{Map}_{\mathbf{Com}_{F_\rho}}(\mathbb{L}_{(A)_{M^*}}\otimes^\mathbb{L}_{(A)_{M^*}}F_\rho,F_\rho)).
\end{equation*}
Thus under these identifications
\begin{equation}
\tilde{f}_{\rho}:\operatorname{Map}_{\mathbf{Com}_{F_\rho}}(\mathbb{L}_{(A)_{M^*}}\otimes^\mathbb{L}_{(A)_{M^*}}F_\rho,F_\rho)\to\operatorname{Map}_{\mathbf{Com}_{F_\rho}}(\mathbb{L}_{(Q)_{M^*}}\otimes^\mathbb{L}_{(Q)_{M^*}}(A)_{M^*}\otimes^\mathbb{L}_{(A)_{M^*}}F_\rho,F_\rho)
\end{equation}
is a quasi-isomorphism.

The functoriality of the identification (\ref{Eq_MappingSpTranslation}) implies that the map $\tilde{f}_{\rho}$ comes from the fundamental triangle of the cotangent complex.
In fact, we have the triangle in $D((A)_{M^*})$
\begin{equation}\label{Eq_FundamentalTriangle}
\mathbb{L}_{(Q)_{M^*}}\otimes^\mathbb{L}_{(Q)_{M^*}}(A)_{M^*}\xrightarrow{f}\mathbb{L}_{(A)_{M^*}}\to\mathbb{L}_{(A)_{M^*}/(Q)_{M^*}}\to
\end{equation}
where $\mathbb{L}_{(A)_{M^*}/(Q)_{M^*}}$ is the relative cotangent complex of $(q)_{M^*}$.
(\ref{Eq_FundamentalTriangle}) yields a triangle in $D(F)$
\begin{equation}\label{Eq_TriangleAtPoints}
\mathbb{L}_{(Q)_{M^*}}\otimes^\mathbb{L}_{(Q)_{M^*}}(A)_{M^*}\otimes^\mathbb{L}_{(A)_{M^*}}F_\rho\xrightarrow{f_\rho}\mathbb{L}_{(A)_{M^*}}\otimes^\mathbb{L}_{(A)_{M^*}}F_\rho\to\mathbb{L}_{(A)_{M^*}/(Q)_{M^*}}\otimes^\mathbb{L}_{(A)_{M^*}}F_\rho\to
\end{equation}
By the naturality of the identification (\ref{Eq_MappingSpTranslation}) via the map $(q)_{M^*}$, one checks $\tilde{f}_\rho=\operatorname{Map}_{\mathbf{Com}_{F_\rho}}(f_\rho,F_\rho)$.
Since $F_\rho=F$ is a field, the functor $\operatorname{Map}_{\mathbf{Com}_{F_\rho}}(-,F_\rho)$ reflects quasi-isomorphisms.
Hence $f_\rho$ is a quasi-isomorphism, and the triangle (\ref{Eq_TriangleAtPoints}) implies that $\mathbb{L}_{(A)_{M^*}/(Q)_{M^*}}\otimes^\mathbb{L}_{(A)_{M^*}}F_\rho\simeq0$ for each $\rho$.

\Cref{Example_FreeAlg} implies that $(Q_0)_{M^*}$ is an ordinary Noetherian commutative $k$-algebra, and so is its quotient $(A)_{M^*}$.
The derived pullback of $\mathbb{L}_{(A)_{M^*}/(Q)_{M^*}}$ to every point $\rho\in\mathrm{Rep}_M(A)$ vanishes.
By \cite[Lemma 2.11]{N92}, it follows that $\mathbb{L}_{(A)_{M^*}/(Q)_{M^*}}\simeq0$.
Hence, $(q)_{M^*}$ is a quasi-isomorphism by \cite[Corollary 3.2.17]{Lu04}.
\end{proof}

\begin{remark}
The proof of \cite[Theorem 21]{BFR14} requires a derived finiteness condition.
More precisely, $A$ is required to have a semifree resolution $q:Q\xrightarrow{\sim}A$ in ${\bf DGA}_k^{\geq0}$ whose underlying DG algebra is free of locally finite dimension (\Cref{Def_LocallyFinite}).
Here, \Cref{Thm_SmoothnessHigherVanishing} avoids the derived finiteness assumption by using different technical tools in place of \cite[Proposition 1.3]{K01}.
Although many examples of interest satisfy the derived finiteness assumption, the verification could be technically demanding.
In the appendix, we will discuss the relation between our approach and the main technical tool \cite[Proposition 1.3]{K01} used in the previous proof, and obtain a more refined comparison result.
\end{remark}

\begin{example}\label{Example_UniversalEnveloping}
Let $\mathfrak{g}$ be a semisimple Lie algebra over $\mathbb{C}$ of dimension $d\geq2$ and let $U\mathfrak{g}$ be its universal enveloping algebra.
It is known that
\begin{equation*}
\operatorname{HH}^n(U\mathfrak{g},N)\cong\operatorname{H}_{\mathrm{Lie}}^n(\mathfrak{g},N)
\end{equation*}
for any $\mathfrak{g}$-module $N$, where $\operatorname{H}_{\mathrm{Lie}}^n(\mathfrak{g},N)$ is the Lie cohomology of $\mathfrak{g}$ with coefficients in $N$.
By Poincar\'e duality, $\operatorname{H}_{\mathrm{Lie}}^d(\mathfrak{g},\mathbb{C})=\mathbb{C}$ where $\mathbb{C}$ denotes the trivial representation $\rho_0:U\mathfrak{g}\to\mathbb{C}$.
Thus by \Cref{Proposition_HigherDRepAtPoint},
\begin{equation*}
\pi_{d-1}(\mathrm{DRep}_{\mathbb{C}}(U\mathfrak{g}),\rho_0)=\mathrm{HH}^d(U\mathfrak{g},\operatorname{End}\mathbb{C})=\mathbb{C}.
\end{equation*}
Therefore the canonical map $\mathrm{Rep}_{\mathbb{C}}(U\mathfrak{g})\to\mathrm{DRep}_{\mathbb{C}}(U\mathfrak{g})$ is not a quasi-isomorphism, and in particular the higher representation homology of $U\mathfrak{g}$ does not vanish.
\emph{By \Cref{Thm_SmoothnessHigherVanishing}, $U\mathfrak{g}$ is not formally smooth.}

The calculation coincides with \cite[Example 2.3]{BKR13} in the case of $\mathfrak{sl}_2$.
In fact, the classical representation scheme $\mathrm{Rep}_{\mathbb{C}^m}(U\mathfrak{g})$ is smooth for all $m\geq1$ (see \cite[Theorem 3.4, Remark 4.7]{AGV16}), but the derived representation schemes detect the failure of smoothness of $U\mathfrak{g}$.

For semisimple Lie algebras, the obstruction to smoothness is detected entirely by the trivial representation.
In fact, by Weyl's Complete Reducibility Theorem, $N\cong\mathbb{C}^q\oplus P$, where $P$ is a $\mathfrak{g}$-module such that $P^{\mathfrak{g}}=0$.
Then $\operatorname{H}_{\mathrm{Lie}}^n(\mathfrak{g},N)=\operatorname{H}_{\mathrm{Lie}}^n(\mathfrak{g},\mathbb{C})^q$.
\end{example}

\section{Detecting non-smoothness using higher representation homology}\label{Sec_ExQ}

As shown in \cite[Section 6.2.2]{BFR14}, the quantum polynomial ring in two variables is not smooth, but all the higher representation homology groups with matrix coefficients vanish.
The next examples indicate that by allowing more general coefficients, representation homology detects the failure of smoothness.

\begin{example}\label{Example_QuantumPlane}
Let $k[x,y]_q$ be the quantum polynomial ring in two variables, i.e. $\displaystyle k[x,y]_q=\frac{k\langle x,y\rangle}{\langle xy-qyx\rangle}$,
where $q\in k^\times$ is not a root of unity.
It has a resolution given by the Shafarevich complex\footnote{See \cite{P05}} $k\langle x,y,t\mid dt=xy-qyx\rangle$.

Consider $\mathrm{HR}_*(k[x,y]_q,(\mathfrak{n}_3)_+)$, where $\mathfrak{n}_3$ is the (non-unital) algebra of $3\times3$ strictly upper triangular matrices over $k$, and $(\mathfrak{n}_3)_+$ is the augmentation of $\mathfrak{n}_3$ viewed as an object in ${\bf Alg}_k$.
A direct computation gives
\begin{align*}
\begin{split}
\begin{bmatrix}s&x&z\\ 0&s&y\\ 0&0&s\end{bmatrix}\begin{bmatrix}t&u&w\\ 0&t&v\\ 0&0&t\end{bmatrix}&-q\begin{bmatrix}t&u&w\\ 0&t&v\\ 0&0&t\end{bmatrix}\begin{bmatrix}s&x&z\\ 0&s&y\\ 0&0&s\end{bmatrix}\\ &=(1-q)\begin{bmatrix}st&su+tx&sw+tz+\frac{xv-quy}{1-q}\\ 0&st&sv+ty\\ 0&0&st\end{bmatrix}
\end{split}
\end{align*}
and so the following Koszul complex computes the representation homology (\cite[Theorem 2.8]{BKR13})
\begin{equation}\label{Eq_QPlaneKoszul}
k\left[s,t,x,y,z,u,v,w,t_1,t_2,t_3,t_4\middle| \begin{matrix}
dt_1=(1-q)st,\,dt_2=(1-q)(sx+tu),\,\\ dt_3=(1-q)(sy+tv),\\ dt_4=(1-q)(sw+tz+\frac{xv-quy}{1-q})\end{matrix}\right].
\end{equation}
%Using \verb|Macaulay2|, one can easily check that this Koszul complex has nontirvial homology group.
This Koszul complex has nontrivial homology.
In fact, observe that the ideal
\begin{equation*}
\left((1-q)st,\,(1-q)(sx+tu),\,(1-q)(sy+tv),\,(1-q)(sw+tz+\frac{xv-quy}{1-q})\right)
\end{equation*}
is contained in the ideal $(t,s,vx-quy)$, which has codimension at most $3$ by Krull's principal ideal theorem.
Thus the Koszul complex (\ref{Eq_QPlaneKoszul}) cannot be acyclic by \cite[Theorem 2.2]{L26}.
Therefore, $k[x,y]_q$ is not smooth.
\end{example}

\begin{example}\label{Example_JordanPlane}
Consider the Jordan plane $\displaystyle k[x,y]_J=\frac{k\langle x,y\rangle}{\langle yx-xy-x^2\rangle}$.
Its resolution can be given by $k\langle x,y,t\mid dt=yx-xy-x^2\rangle$, and we consider $\mathrm{HR}_*(k[x,y]_J,(\mathfrak{n}_3)_+)$ again.
Notice that
\begin{align*}
\begin{split}
\begin{bmatrix}t&u&w\\ 0&t&v\\ 0&0&t\end{bmatrix}\begin{bmatrix}s&x&z\\ 0&s&y\\ 0&0&s\end{bmatrix}&-\begin{bmatrix}s&x&z\\ 0&s&y\\ 0&0&s\end{bmatrix}\begin{bmatrix}t&u&w\\ 0&t&v\\ 0&0&t\end{bmatrix}-\begin{bmatrix}s&x&z\\ 0&s&y\\ 0&0&s\end{bmatrix}^2\\ &=\begin{bmatrix}-s^2&-2sx&-vx+uy-xy-2sz\\ 0&-s^2&-2sy\\ 0&0&-s^2\end{bmatrix},
\end{split}
\end{align*}
so the representation homology is computed via the Koszul complex (\cite[Theorem 2.8]{BKR13})
\begin{equation}\label{Eq_JPlaneKoszul}
k\left[s,t,x,y,z,u,v,w,t_1,t_2,t_3,t_4\middle| \begin{matrix}
dt_1=-s^2,\,dt_2=-2sx,\,dt_3=-2sy,\\ dt_4=-vx+uy-xy-2sz\end{matrix}\right].
\end{equation}
The same argument works here: the ideal
\begin{equation*}
\left(-s^2,\,-2sx,\,-2sy,\,-vx+uy-xy-2sz\right)
\end{equation*}
is contained in the ideal $(s,-vx+uy-xy)$, which has codimension at most $2$ by Krull's principal ideal theorem.
Thus the Koszul complex (\ref{Eq_JPlaneKoszul}) cannot be acyclic by the same argument as \Cref{Example_QuantumPlane}, and so $k[x,y]_J$ is not smooth.
\end{example}

Motivated by the preceding examples, we ask the following question:

\begin{question}\label{ques}
Let $A$ be a Noetherian finitely generated associative $k$-algebra.
Suppose that $\operatorname{Rep}_M(A)$ is not empty for any finite-dimensional algebra $M$.
If for \emph{every} finite-dimensional algebra $M$, the higher representation homology groups vanish, i.e.
\begin{equation*}
\mathrm{HR}_i(A,M)=0,\quad i>0,
\end{equation*}
is $A$ formally smooth?
\end{question}

%In derived algebraic geometry, general spaces are understood to be approximated by well-behaved ones. \Cref{Thm_SmoothnessHigherVanishing} suggests that noncommutative smooth spaces are already ``well-behaved," in the sense that no higher homological information is needed for such approximations---these spaces are approximated by themselves.
%The philosophy underlying \Cref{ques} is that the pathological behavior of a space should be detected by higher homotopical information with suitably chosen coefficients.
%These higher elements are expected to serve as the obstructions.
%A successful proof of \Cref{ques} would show that the vanishing of representation homology with arbitrary coefficients is equivalent to associative smoothness under very mild hypotheses.

\begin{remark}
The hypothesis that $\operatorname{Rep}_M(A)$ is not empty for any finite-dimensional algebra $M$ is made to exclude algebras that do not admit sufficiently many finite-dimensional representations, for instance, the Weyl algebra $A_1$.
It is likely stronger than necessary.
For example, $\operatorname{Rep}_{\operatorname{End}k}(\mathrm{M}_2(k))=\emptyset$ but we will show in \Cref{Prop_FinDimAlg} that \Cref{ques} has an affirmative answer for $\mathrm{M}_2(k)$.
However, the hypothesis is a useful formulation of having sufficiently many representations, and one can expect that even a proof under this strong hypothesis will lead to new conceptual insights, and many examples in practice satisfy this assumption.
One may expect the hypothesis to be weakened after a successful proof.
\end{remark}

\Cref{ques} asks whether finite-dimensional coefficient algebras form a conservative family of test objects for associative smoothness.
The examples above show that certain coefficients detect nonsmoothness invisible to the classical matrix-valued representation homology.
If the answer is positive, then formal smoothness would be characterized by the higher vanishing of representation homology with arbitrary finite-dimensional coefficients.
In this case, the derived Kontsevich–Rosenberg principle would provide a complete characterization of associative smoothness, not merely a conceptual justification.
%Such a result would provide a conceptual bridge between commutative and noncommutative geometry through the methods of derived algebraic geometry.

\begin{remark}
Formal smoothness is Morita invariant, whereas representation homology with finite-dimensional coefficients is sensitive to the algebra structure, and is not Morita invariant in general.
This contrast is already visible in the simplest Morita equivalence $k\sim\mathrm{M}_2(k)$: the groups $\mathrm{HR}_*(k,k)$ and $\mathrm{HR}_*(\mathrm{M}_2(k),k)$ do not agree---$\mathrm{HR}_0(k,k)=k$ but $\mathrm{HR}_0(\mathrm{M}_2(k),k)=0$.

Moreover, even allowing arbitrary finite-dimensional coefficients does not restore Morita invariance.
More precisely, there does not exist a finite-dimensional algebra $M$ such that $\operatorname{Rep}_{\operatorname{End}k}(k)\cong\operatorname{Rep}_M(\mathrm{M}_2(k))$.
Suppose on the contrary there is an $M$, and let $f:\mathrm{M}_2(k)\to M$ be an algebra homomorphism, then it is injective since $\mathrm{M}_2(k)$ is simple.
For a non-scalar invertible element $g\in\mathrm{M}_2(k)$, $x\mapsto f(gxg^{-1})$ is also a map of algebras.
$\operatorname{Rep}_{\operatorname{End}k}$ consists of only one point, so $x\mapsto f(gxg^{-1})$ is the same as $f$, hence
\begin{equation*}
f(x)=f(g)f(x)f(g)^{-1}
\end{equation*}
for all $x\in\mathrm{M}_2(k)$.
By the injectivity of $f$, $g$ is in the centralizer of $\mathrm{M}_2(k)$, which contradicts the fact that $g$ is non-scalar.

Since the classical representation scheme is recovered by degree-zero representation homology, the above examples show that representation homology with finite-dimensional coefficients is not Morita invariant in general.
\end{remark}

The following proposition gives an affirmative answer for finite-dimensional algebras:

\begin{proposition}\label{Prop_FinDimAlg}
Let $A$ be a finite-dimensional $k$-algebra, where $k$ is algebraically closed.
If $\operatorname{HR}_i(A,M)=0$ for all $i>0$ and each finite-dimensional $k$-algebra $M$, then $A$ is formally smooth.
\end{proposition}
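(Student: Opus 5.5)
The strategy is to show that vanishing of higher representation homology forces $\mathrm{HH}^2(A,N)=0$ for every finite-dimensional $A$-bimodule $N$. The coefficient algebras used for this are the square-zero extensions $M=A\ltimes N$. Finiteness of $A$ then upgrades this vanishing to projectivity of $\Omega^1_{\mathrm{nc}}(A)$, which is condition (2) of \Cref{Def_Smoothness}.

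\emph{Step 1: derived tangent spaces vanish in positive degrees.} Fix a finite-dimensional $M$ and a $k$-point $\rho:A\to M$ of $\mathrm{Rep}_M(A)$; since $k$ is algebraically closed we may take $F=k$. The hypothesis $\mathrm{HR}_i(A,M)=0$ for $i>0$, together with \Cref{Proposition_ZeroTrunction}, says that $(q)_{M^*}:(Q)_{M^*}\to(A)_{M^*}$ is a quasi-isomorphism. By the cotangent-complex description (\ref{Eq_MappingSpTranslation}), which is invariant under quasi-isomorphism (the same naturality used in the proof of \Cref{Thm_SmoothnessHigherVanishing}), we get
\begin{equation*}
\pi_i(\mathrm{DRep}_M(A),\rho)\cong\mathrm{H}_i\bigl(\operatorname{Map}(\mathbb{L}_{(A)_{M^*}},k_\rho)\bigr).
\end{equation*}
Here $(A)_{M^*}$ is an ordinary algebra, so $\mathbb{L}_{(A)_{M^*}}$ is connective and $k_\rho$ is discrete. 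Hence for $i\ge1$ we have $\mathrm{H}_i=[\mathbb{L}[i],k_\rho]=0$, because $\mathbb{L}[i]$ is $i$-connective. Combining this with \Cref{Proposition_HigherDRepAtPoint} gives $\mathrm{HH}^{i+1}(A,M)=0$ for all $i\ge1$, where $M$ is viewed as a bimodule via $\rho$. In particular $\mathrm{HH}^2(A,M)=0$.

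\emph{Step 2: choice of coefficients.} Let $N$ be any finite-dimensional $A$-bimodule, and put $M:=A\ltimes N=A\oplus N$ with $N^2=0$. This is a finite-dimensional algebra, and $\rho:A\hookrightarrow A\ltimes N$ is an algebra map. The induced bimodule structure on $M$ is the direct sum $A\oplus N$, so by additivity of Hochschild cohomology Step 1 yields
\begin{equation*}
0=\mathrm{HH}^2(A,A\ltimes N)=\mathrm{HH}^2(A,A)\oplus\mathrm{HH}^2(A,N),
\end{equation*}
and hence $\mathrm{HH}^2(A,N)=0$. This choice is also what circumvents the lack of maps into matrix algebras, for example for $A=\mathrm{M}_2(k)$, since $\rho$ always exists.

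\emph{Step 3: from finite-dimensional to all bimodules.} We have $\mathrm{HH}^2(A,N)\cong\mathrm{Ext}^1_{A^e}(\Omega^1_{\mathrm{nc}}(A),N)$. Since $A$ is finite-dimensional, $A^e$ is a finite-dimensional, hence Artinian, algebra, and $\Omega^1_{\mathrm{nc}}(A)\subseteq A\otimes A$ is a finitely generated $A^e$-module. Take a projective cover $0\to K\to P\to\Omega^1_{\mathrm{nc}}(A)\to0$. Then $\mathrm{Ext}^1_{A^e}(\Omega^1_{\mathrm{nc}}(A),S)=0$ for every simple $A^e$-module $S$, and each such $S$ is finite-dimensional. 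By minimality of the cover, this forces $\operatorname{Hom}(K,S)=0$ for all simple $S$, so $K=0$. Thus $\Omega^1_{\mathrm{nc}}(A)$ is projective and $A$ is formally smooth.

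The main obstacle is Step 1, specifically justifying that the paper's $\pi_i$ is a homotopy invariant of the derived scheme. This is needed so that $\pi_i(\mathrm{DRep}_M(A),\rho)$ may be computed on the classical scheme once $(q)_{M^*}$ is a quasi-isomorphism. I would handle it exactly as in the proof of \Cref{Thm_SmoothnessHigherVanishing}, using the fundamental triangle (\ref{Eq_FundamentalTriangle}): if $(q)_{M^*}$ is a quasi-isomorphism then $\mathbb{L}_{(A)_{M^*}/(Q)_{M^*}}\simeq0$, so $f_\rho$ and hence $\tilde f_\rho$ are quasi-isomorphisms.
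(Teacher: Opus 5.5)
\textbf{There is a genuine gap in Step 1.} Steps 2 and 3 are sound. Step 2 is exactly the paper's choice of coefficient $A\ltimes N$ at $\rho_0$. Step 3 is a correct, self-contained replacement for the paper's appeal to \cite[Theorem 4.6]{AGV16}, and it does not need $k$ algebraically closed.

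The problem is the connectivity argument in Step 1. In the indexing under which \Cref{Proposition_HigherDRepAtPoint} is true, $\pi_i(X,x)$ is $\operatorname{Ext}^i(\mathbb{L}_X\otimes^{\mathbb{L}}k_x,k_x)$. That is, it consists of classes of maps $\mathbb{L}\to k_\rho[i]$ (derivations out of the degree-$i$ generators of a semifree model), not maps $\mathbb{L}[i]\to k_\rho$.

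Your argument only uses that $\mathbb{L}$ is connective, which is equally true of $\mathbb{L}_{(Q)_{M^*}}$. Applied there, it would give $\mathrm{HH}^{i+1}(A,M)=0$ for every algebra $A$ and every point $\rho$, contradicting \Cref{Example_UniversalEnveloping}.

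For an ordinary algebra $R=(A)_{M^*}$, $\pi_1(\operatorname{Spec}R,\rho)$ is the Andr\'e--Quillen group $D^1(R/k;k_\rho)$, the minimal obstruction space, which vanishes exactly at smooth points. A concrete example: take $A=k[x,y]_q$ and $M=k$. Then $\mathrm{HR}_{>0}(A,k)=0$, since the model is the Koszul complex on the nonzerodivisor $(1-q)xy$. Yet $\pi_1(\mathrm{DRep}_k(A),0)=\operatorname{Ext}^2_A(k,k)\neq0$, reflecting the node of $\operatorname{Spec}k[x,y]/(xy)$ at the origin.

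So homotopy invariance of $\pi_i$, which you flag as the main obstacle, is not the issue. The quasi-isomorphism only gives $\mathrm{HH}^{i+1}(A,M)\cong D^i((A)_{M^*}/k;k_\rho)$ for $i\geq1$. You additionally need $\operatorname{Rep}_M(A)$ to be smooth at $\rho$. The paper's proof passes from the quasi-isomorphism to $\pi_i=0$ in one line as well, so deferring to it does not close the gap.

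\textbf{How to repair it for $M=A\ltimes N$ at $\rho_0$.} Let $p\colon A\ltimes N\to A$ be the projection. Let $U\subseteq\operatorname{Rep}_{A\ltimes N}(A)$ be the open locus, given by a determinant condition and containing $\rho_0$, where the linear map $\phi:=p\circ\rho$ of $A$ is invertible.

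\begin{enumerate}
\item Write $\rho=\phi+\delta$ with $\delta$ the $N$-component. Multiplicativity of $\rho$ says $\delta(xy)=\phi(x)\delta(y)+\delta(x)\phi(y)$, so $\delta\circ\phi^{-1}\in\mathrm{Der}(A,N)$.
\item Hence $\rho\mapsto(\phi,\delta\circ\phi^{-1})$ is an isomorphism $U\cong\operatorname{Aut}(A)\times\mathrm{Der}(A,N)$, functorial in commutative $R$, with inverse $(\phi,D)\mapsto\phi+D\circ\phi$.
\item Since $\operatorname{char}k=0$, the algebraic group $\operatorname{Aut}(A)$ is smooth by Cartier's theorem. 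So $\operatorname{Rep}_{A\ltimes N}(A)$ is smooth at $\rho_0$.
\item Therefore $\mathbb{L}_{(A)_{M^*}}\otimes^{\mathbb{L}}k_{\rho_0}$ is concentrated in degree $0$, and $D^i=0$ for $i\geq1$.
\end{enumerate}

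With this inserted, Step 1 yields $\mathrm{HH}^{i+1}(A,A\ltimes N)=0$ for $i\geq1$, and your Steps 2 and 3 complete the proof.
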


\begin{proof}
Let $N$ be any finite-dimensional $A$-bimodule.
The trivial extension $M:=A\ltimes N$ is a finite-dimensional $k$-algebra, and there is a representation via the canonical inclusion $\rho_0:A\hookrightarrow A\ltimes N$, which is a $k$-point of $\operatorname{Rep}_{A\ltimes N}(A)$.
The higher vanishing hypothesis implies that the canonical map $\mathrm{DRep}_{A\ltimes N}(A)\to\mathrm{Rep}_{A\ltimes N}(A)$ is a quasi-isomorphism.
Hence $\pi_i\left(\operatorname{DRep}_{A\ltimes N}(A),\rho_0\right)=0$ for all $i$.
Therefore
\begin{equation*}
\operatorname{HH}^{i+1}(A,A\ltimes N)\cong\pi_i\left(\operatorname{DRep}_{A\ltimes N}(A),\rho_0\right)=0,
\end{equation*}
where the first equality is \Cref{Proposition_HigherDRepAtPoint}.

Under the $A$-bimodule structure induced by $\rho_0$, $A\ltimes N\cong A\oplus N$ as $A$-bimodules; therefore, by additivity of Hochschild cohomology,
\begin{equation*}
\operatorname{HH}^{i+1}(A,A)\oplus\operatorname{HH}^{i+1}(A,N)=\operatorname{HH}^{i+1}(A,A\ltimes N)=0
\end{equation*}
for all $i\geq1$.
Therefore, the $k$-vector space $\operatorname{HH}^n(A,N)=0$ for all $n\ge2$; in particular, $\operatorname{HH}^2(A,N)=0$ for every finite-dimensional bimodule $N$.
By \cite[Theorem 4.6]{AGV16}, $A$ is formally smooth.
\end{proof}

\begin{remark}
\Cref{Prop_FinDimAlg} shows that
\begin{equation*}
\operatorname{HH}^n(A,N)=0
\end{equation*}
for all $n>1$ and all finite-dimensional $A$-bimodules $N$.
The assumption that $k$ is algebraically closed is used only through \cite[Theorem 4.6]{AGV16}, which characterizes finite-dimensional formally smooth algebras by the vanishing of Hochschild cohomology with coefficients in finite-dimensional bimodules.
We have not investigated whether the algebraically closed hypothesis can be removed in the proof of \cite[Theorem 4.6]{AGV16}.
If an analogue of \cite[Theorem 4.6]{AGV16} over an arbitrary field is available, the same proof yields \Cref{Prop_FinDimAlg} without the algebraically closed hypothesis.
\end{remark}

It is also natural to ask whether analogous statements of \Cref{ques} can be developed for nonaffine noncommutative spaces and for geometric properties beyond smoothness.

\appendix

\section{A comparison criterion for DG schemes}

In this appendix, we reformulate the argument underlying \Cref{Thm_SmoothnessHigherVanishing} as a comparison criterion for DG schemes.
Although the ingredients are standard, the resulting criterion recovers the comparison theorem of Ciocan-Fontanine–Kapranov under substantially weaker hypotheses.
Since this discussion is orthogonal to the main theme of the paper, we place it in an appendix.

The starting point is the following comparison theorem of Kapranov, which plays a key role in the proof of \cite[Theorem 21]{BFR14}.

\begin{lemma}[{\cite[Proposition 1.3]{K01}}]\label{Lemma_DerSchComparison}
Let $f:(X_0,\mathcal{O}_{X,\bullet})\to(Y_0,\mathcal{O}_{Y,\bullet})$ be a morphism between DG schemes, where both $X=(X_0,\mathcal{O}_{X,\bullet})$ and $Y=(Y_0,\mathcal{O}_{Y,\bullet})$ satisfy the following properties\footnote{Such DG schemes are called DG-manifolds according to \cite{K01,CK01}.}:
\begin{enumerate}
\item The underlying schemes $(X_0,\mathcal{O}_{X,0})$ and $(Y_0,\mathcal{O}_{Y,0})$ are both ordinary smooth varieties.
\item There exist locally finite-dimensional vector spaces $E_\bullet$ and $F_\bullet$ such that $\mathcal{O}_{X,\bullet}\cong\operatorname{Sym}_{\mathcal{O}_{X,0}}(E_\bullet)$ and $\mathcal{O}_{Y,\bullet}\cong\operatorname{Sym}_{\mathcal{O}_{Y,0}}(F_\bullet)$.
\end{enumerate}
Then the following are equivalent:
\begin{enumerate}[label=(\alph*)]
\item $f$ is a quasi-isomorphism.
\item $\pi_0(f):\pi_0(X)\to\pi_0(Y)$ is an isomorphism, and for any field extension $F$ of $k$ and for any $F$-point $x\in X$, the induced maps $df_x:\pi_i(X,x)\to\pi_i(Y,f(x))$ are isomorphisms for all $i>0$.
\end{enumerate}
\end{lemma}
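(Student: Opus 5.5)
Although the paper cites this lemma from Kapranov, here is how I would prove it. The direction (a)$\Rightarrow$(b) is formal. A quasi-isomorphism $f$ induces an isomorphism on $\mathrm{H}_0$ of the structure sheaves, so $\pi_0(f)$ is an isomorphism. Locally the map of semifree commutative DG algebras $\mathcal{O}_{Y,\bullet}\to\mathcal{O}_{X,\bullet}$ is then a weak equivalence between cofibrant objects over $k$ (here $\operatorname{char}k=0$ and $\mathcal{O}_{X,0}$, $\mathcal{O}_{Y,0}$ are smooth). Hence it induces a quasi-isomorphism of Kähler differential complexes $\Omega^1_{\mathrm{comm}}(\mathcal{O}_Y)\otimes\mathcal{O}_X\to\Omega^1_{\mathrm{comm}}(\mathcal{O}_X)$, which model the cotangent complexes. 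Applying $\underline{\mathrm{Hom}}(-,F_x)$ and using the identification (\ref{Eq_MappingSpTranslation}), each $df_x$ is an isomorphism.

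For (b)$\Rightarrow$(a), I would follow the argument of \Cref{Thm_SmoothnessHigherVanishing}.
\begin{enumerate}
\item Reduce to the affine case, $X=\operatorname{Spec}\mathcal{O}_X$ and $Y=\operatorname{Spec}\mathcal{O}_Y$, by working locally on $Y_0$ and then on $X_0$. Condition 1 makes $\mathcal{O}_{X,0}$ and $\mathcal{O}_{Y,0}$ Noetherian, and condition 2 makes $\mathcal{O}_X$ and $\mathcal{O}_Y$ cofibrant, so that $\mathbb{L}_{\mathcal{O}_X}$ is represented by the complex $\Omega^1(\mathcal{O}_X)$, which is locally free of finite rank in each degree.
\item Apply the transitivity triangle $\mathbb{L}_{\mathcal{O}_Y}\otimes^{\mathbb{L}}\mathcal{O}_X\xrightarrow{g}\mathbb{L}_{\mathcal{O}_X}\to\mathbb{L}_{\mathcal{O}_X/\mathcal{O}_Y}\to$.
\item Dualize against $F_x$. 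As in the proof of \Cref{Thm_SmoothnessHigherVanishing}, $\operatorname{Map}(g\otimes F_x,F_x)$ is identified with $df_x$. For $i>0$ this is an isomorphism by hypothesis.
\item In degree $0$, the map $\pi_0(X,x)\to\pi_0(Y,f(x))$ is not part of the hypothesis. Instead, use that $\pi_0(f)$ is an isomorphism, which identifies the classical points, and get an isomorphism on $\mathrm{H}_0$ of the fibres from $\mathrm{H}_0\mathbb{L}$ of $\pi_0$.
\item Since $F_x$ is a field, dualizing reflects quasi-isomorphisms. Hence $\mathbb{L}_{\mathcal{O}_X/\mathcal{O}_Y}\otimes^{\mathbb{L}}F_x\simeq0$ at every point $x$.
\end{enumerate}

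Next, I would globalize. View the relative cotangent complex as an object of $D(\mathrm{H}_0\mathcal{O}_X)$, or of $D(\mathcal{O}_{X,0})$ via the homology modules, which are coherent over the Noetherian ring $\mathcal{O}_{X,0}$ by the local finiteness. Then \cite[Lemma 2.11]{N92} shows that vanishing of all derived fibres forces $\mathbb{L}_{\mathcal{O}_X/\mathcal{O}_Y}\simeq0$. Finally, $\mathbb{L}_{\mathcal{O}_X/\mathcal{O}_Y}\simeq0$ together with the fact that $\pi_0(f)$ is an isomorphism implies that $f$ is a quasi-isomorphism. This is the connectivity argument of \cite[Corollary 3.2.17]{Lu04}. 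Suppose the first nonzero homology of the cofiber of $\mathcal{O}_Y\to\mathcal{O}_X$ sits in degree $n\ge1$. By Hurewicz, it agrees with $\mathrm{H}_n(\mathbb{L}_{\mathcal{O}_X/\mathcal{O}_Y})$, which is zero, a contradiction.

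The main obstacle is this globalization step. Neeman's fibrewise detection requires the relevant complex to live over a Noetherian base and, to be safe, to be bounded above with coherent homology. Here the issue is where the points $x$ range. They must exhaust $\operatorname{Spec}$ of the ring over which the test is performed. If only points of $\pi_0(X)$ are used, the test should be carried out over $\mathrm{H}_0\mathcal{O}_X$, which is a quotient of $\mathcal{O}_{X,0}$ and hence still Noetherian. The relative cotangent complex base-changed to $\mathrm{H}_0$ is still detected, again by connectivity. I would first check that the derived fibres over points of $X_0$ outside $\pi_0(X)$ are irrelevant for this reason.
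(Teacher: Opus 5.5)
Your proposal is correct and follows the same route as the paper, which obtains this lemma as a corollary of its comparison result for DG schemes with locally Noetherian $X_0$: transitivity triangle, dualize against residue fields, Neeman's fibrewise test, then Lurie's connectivity criterion. Two points you make explicit are left implicit in the paper's appendix: recovering the degree-$0$ comparison from $\pi_0(f)$ via $\mathrm{H}_0(\mathbb{L}_{\mathcal{O}_X}\otimes^{\mathbb{L}}F_x)\cong\Omega^1_{\mathrm{H}_0\mathcal{O}_X}\otimes F_x$, and running Neeman over $\mathrm{H}_0\mathcal{O}_X$ at points of $\pi_0(X)$ before lifting the vanishing by connectivity; the only slip is calling $\mathcal{O}_X,\mathcal{O}_Y$ cofibrant over $k$, whereas what you actually use (and have, since $\mathcal{O}_{X,0}$ is smooth and $\mathcal{O}_X$ is semifree over it) is that $\Omega^1$ models the cotangent complex.
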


The following two lemmas are well known; we include proofs only for completeness.

\begin{lemma}[c.f. {\cite[Lemma 2.11]{N92}}]\label{Lemma_GlobalNeeman}
Let $X$ be a locally Noetherian scheme.
Given an object $C\in D(\mathrm{QCoh}(X))$, then $C\simeq0$ if and only if for any $x\in X$, $C\otimes^{\mathbb{L}}\kappa(x)\simeq0$, where $\kappa(x)$ is the residue field of the point $x$.
\end{lemma}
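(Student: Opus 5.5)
The plan is to reduce to the affine case and then to the classical local statement over Noetherian local rings. The "only if" direction is immediate, since $\kappa(x)\otimes^{\mathbb{L}}0=0$. For the converse, vanishing of $C$ in $D(\mathrm{QCoh}(X))$ can be checked on an affine open cover $X=\bigcup U_j$ with $U_j=\operatorname{Spec}R_j$ and $R_j$ Noetherian. For such a cover, $D(\mathrm{QCoh}(U_j))\simeq D(R_j)$, and restriction is exact: $C\simeq0$ iff every $H_n(C)$, a quasi-coherent sheaf, vanishes, and that is local. Moreover, the fiber $C\otimes^{\mathbb{L}}\kappa(x)$ only depends on $C|_{U_j}$ for $x\in U_j$. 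So we may assume $X=\operatorname{Spec}R$ with $R$ Noetherian and $C\in D(R)$ a complex of $R$-modules.

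Next we localize. Since $H_n(C)=0$ iff $H_n(C)_{\mathfrak p}=H_n(C_{\mathfrak p})=0$ for all primes $\mathfrak p$, and $C_{\mathfrak p}\otimes^{\mathbb{L}}_{R_{\mathfrak p}}\kappa(\mathfrak p)\simeq C\otimes^{\mathbb{L}}_R\kappa(\mathfrak p)$, it suffices to show the following. If $(R,\mathfrak m,\kappa)$ is a Noetherian local ring and $C\in D(R)$ satisfies $C\otimes^{\mathbb{L}}_{R'}\kappa(\mathfrak q)\simeq0$ for every prime $\mathfrak q$, then $C\simeq0$. The hypothesis at all points, and not only at the closed one, is essential here, because $C$ is unbounded and not assumed to have finitely generated homology. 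For example, $C=\operatorname{Frac}(R)$ for a domain has vanishing derived fiber at $\mathfrak m$ but not at the generic point.

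The key step, and the main obstacle, is this local statement for unbounded complexes with arbitrary homology. This is exactly Neeman's classification (\cite[Lemma 2.11]{N92}), and it can be cited directly. To sketch its proof, I would use localizing subcategories. Let $\mathcal{S}$ be the localizing subcategory of $D(R)$ generated by the residue fields $\kappa(\mathfrak p)$. Show by Noetherian induction that $R\in\mathcal{S}$, using the local cohomology (Koszul) triangles $\Gamma_{\mathfrak p}R\to R_{\mathfrak p}\to\cdots$. The functor $\Gamma_{V(\mathfrak p)}$ builds $R_{\mathfrak p}$-modules supported at $\mathfrak p$ from $\kappa(\mathfrak p)$ via filtrations by $\mathfrak p$-torsion, and Noetherianness makes the induction over $\operatorname{Spec}R$ well-founded. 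Then $\mathcal{S}=D(R)$, since $R$ is a compact generator. Now the full subcategory $\{D: C\otimes^{\mathbb{L}}D\simeq0\}$ is localizing and contains every $\kappa(\mathfrak p)$, hence contains $R$. Therefore $C\simeq C\otimes^{\mathbb{L}}_R R\simeq0$.

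Finally, gluing back over the affine cover gives $C\simeq0$ on $X$. The only delicate points are the Noetherian induction showing $R$ is generated by residue fields and the compatibility of derived fibers with localization. Both are standard for locally Noetherian schemes.
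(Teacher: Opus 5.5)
Your proposal is correct and follows the paper's route: reduce to an affine Noetherian open cover, where $D(\mathrm{QCoh})$ becomes $D(R)$ and vanishing can be checked on homology sheaves, and then apply \cite[Lemma 2.11]{N92}. The further localization to local rings is harmless but unnecessary, since Neeman's lemma (and your localizing-subcategory sketch of it) works for any Noetherian ring; also, the $R'$ in your local statement should read $R$.
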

\begin{proof}
For any locally Noetherian scheme $X$, it is covered by affine Noetherian opens $\displaystyle X=\bigcup_{i\in\Lambda}X_i$.
Over each $X_i$, we know $C|_{X_i}\otimes^{\mathbb{L}}_{\mathcal{O}(X_i)}\kappa(x)\simeq0$ for all $x\in X_i$, hence $C|_{X_i}\simeq0$ by \cite[Lemma 2.11]{N92}.
Therefore $C\simeq0$.
\end{proof}

\begin{lemma}[c.f. {\cite[Corollary 3.2.17]{Lu04}}]\label{Lemma_GlobalLurie}
A morphism $f:X\to Y$ of DG-schemes (simplicial schemes) is an equivalence if and only if $f$ induces an isomorphism
$\pi_0(X)\to\pi_0(Y)$ and $\mathbb{L}_{X/Y}=0$.
\end{lemma}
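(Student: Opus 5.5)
The statement to prove is \Cref{Lemma_GlobalLurie}: a morphism $f:X\to Y$ of DG schemes is an equivalence if and only if $\pi_0(f)$ is an isomorphism and $\mathbb{L}_{X/Y}\simeq0$. The forward direction is formal. An equivalence induces an isomorphism on $\mathrm{H}_0$ of structure sheaves, hence on $\pi_0$. Its relative cotangent complex vanishes, because $\mathbb{L}$ is homotopy invariant, so $\mathbb{L}_{X/Y}\simeq\mathbb{L}_{Y/Y}=0$.

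For the converse, I first reduce to the affine case. Being an equivalence of DG schemes is local: it means $f_0$ is an isomorphism on $\pi_0$ and $f^{-1}\mathcal{H}_i(\mathcal{O}_Y)\to\mathcal{H}_i(\mathcal{O}_X)$ is an isomorphism of quasi-coherent sheaves on $\pi_0(X)\cong\pi_0(Y)$ for every $i$. The formation of $\mathbb{L}_{X/Y}$ is also compatible with restriction to opens. So I cover $Y$ by affine DG subschemes $\operatorname{Spec}B$ and their preimages by affine DG subschemes $\operatorname{Spec}C$. This reduces the claim to a map $B\to C$ of connective CDGAs in characteristic $0$ with $\mathrm{H}_0(B)\cong\mathrm{H}_0(C)$ and $\mathbb{L}_{C/B}\simeq0$, which is exactly the setting of \cite[Corollary 3.2.17]{Lu04}.

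In the affine case I argue by induction on homological degree. Suppose $B\to C$ is $n$-connected, meaning $\mathrm{H}_i(B)\cong \mathrm{H}_i(C)$ for $i<n$ and $\mathrm{H}_n(B)\to\mathrm{H}_n(C)$ is surjective; the base case $n=1$ holds since $\pi_0$ is an isomorphism. Let $K$ be the homotopy cofiber of $B\to C$ as $B$-modules, so $K$ is $n$-connective. The connectivity estimate (Hurewicz-type) for the relative cotangent complex then gives $\mathrm{H}_{n}(\mathbb{L}_{C/B})\cong \mathrm{H}_n(K\otimes^{\mathbb{L}}_B C)\cong \mathrm{H}_n(K)\otimes_{\mathrm{H}_0 B}\mathrm{H}_0C=\mathrm{H}_n(K)$. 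In characteristic $0$ this can be read off from a cell attachment of generators in degree $\ge n$.

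Since $\mathbb{L}_{C/B}\simeq0$, we get $\mathrm{H}_n(K)=0$, so $\mathrm{H}_n(B)\to\mathrm{H}_n(C)$ is injective. Together with the surjectivity already assumed, the map is $(n+1)$-connected, and induction finishes the affine case. Gluing then yields the global statement.

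The main obstacle is the connectivity estimate $\mathrm{H}_n(\mathbb{L}_{C/B})\cong\mathrm{H}_n(\operatorname{cofib})$ in the first nonvanishing degree. I would prove it by factoring $B\to C$ as a semifree extension $B[x_\alpha]$ with all generators in degrees $\ge n$; this factorization exists by $n$-connectivity. For such an extension, $\mathbb{L}_{C/B}$ is the free $C$-module on the generators, and only the degree-$n$ generators and their boundaries contribute in degree $n$. Everything else is citation, and I would simply invoke \cite[Corollary 3.2.17]{Lu04} rather than reprove it.
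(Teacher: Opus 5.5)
Your strategy is Lurie's original local argument: reduce to the affine case, then induct on degree using the Hurewicz-type estimate for $\mathbb{L}_{C/B}$. The paper instead argues globally through the universal property of the cotangent complex. It translates $\tau_{\le n}\mathbb{L}\simeq0$ into lifting properties of $f$ against extensions with kernel concentrated in degrees $<n$, and climbs the Postnikov tower without passing to affines. Your route is sound in principle, but the inductive step as written is off by one degree and does not work.

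With your definition of $n$-connected (isomorphisms on $\mathrm{H}_i$ for $i<n$, surjection on $\mathrm{H}_n$), the exact sequence
\[
\mathrm{H}_{n+1}(B)\to\mathrm{H}_{n+1}(C)\to\mathrm{H}_{n+1}(K)\to\mathrm{H}_n(B)\to\mathrm{H}_n(C)\to\mathrm{H}_n(K)\to\mathrm{H}_{n-1}(B)\to\mathrm{H}_{n-1}(C)
\]
shows that $K$ is already $(n+1)$-connective. So $\mathrm{H}_n(K)=0$ holds automatically, and deducing it from $\mathbb{L}_{C/B}\simeq0$ gives no information. Moreover, $\mathrm{H}_n(K)=0$ does not imply that $\mathrm{H}_n(B)\to\mathrm{H}_n(C)$ is injective, because the kernel of that map is the image of $\mathrm{H}_{n+1}(K)$. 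For instance, take $B=k[\epsilon]$ with $|\epsilon|=1$ and zero differential, and $C=k$. This map is $1$-connected in your sense and has $\mathrm{H}_1(K)=0$, yet $\mathrm{H}_1(B)\to\mathrm{H}_1(C)$ is not injective. You also never obtain the surjectivity on $\mathrm{H}_{n+1}$ that $(n+1)$-connectedness requires. Finally, your base case $n=1$ already needs $\mathrm{H}_1(B)\to\mathrm{H}_1(C)$ to be surjective, which does not follow from $\pi_0(f)$ being an isomorphism.

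The repair is to apply the estimate one degree higher. Since $K$ is $(n+1)$-connective and $n+1\ge1$, you have $\mathrm{H}_{n+1}(\mathbb{L}_{C/B})\cong\mathrm{H}_0(C)\otimes_{\mathrm{H}_0(B)}\mathrm{H}_{n+1}(K)=\mathrm{H}_{n+1}(K)$. Hence $\mathbb{L}_{C/B}\simeq0$ forces $\mathrm{H}_{n+1}(K)=0$, and the sequence above then gives exactly injectivity on $\mathrm{H}_n$ and surjectivity on $\mathrm{H}_{n+1}$. Start the induction at $n=0$, where surjectivity on $\mathrm{H}_0$ is the hypothesis. Correspondingly, in your cell-attachment sketch the generators can be taken in degrees $\ge n+1$, and the relevant degree is $n+1$.

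There is also a smaller point in your affine reduction. For arbitrary affine opens $\operatorname{Spec}C\subseteq f^{-1}(\operatorname{Spec}B)$, the map $\mathrm{H}_0(B)\to\mathrm{H}_0(C)$ only induces an open immersion on spectra, not an isomorphism. You need to shrink to compatible basic opens $D(g)$ and $D(f^*g)$ around each point of $\pi_0(X)$ to get $\mathrm{H}_0(B)\cong\mathrm{H}_0(C)$; this is harmless because homology commutes with localization.

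With these corrections your proof is a valid and more explicit alternative to the paper's. It makes the degree-by-degree mechanism concrete via cell attachments, at the cost of an affine reduction. The paper's argument is global but leaves implicit how the lifting properties yield isomorphisms on homology sheaves.
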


\begin{proof}
The necessity is straightforward.
We argue by induction along the Postnikov tower and show that $f$ induces isomorphisms on all homotopy groups.
By the universal property of the cotangent complex, $\tau_{\leq1}\mathbb{L}_{Y/X}=0$ if and only if $f:X\to Y$ has the lifting property with respect to maps $T'\to T$ of $0$-truncated DG-schemes whose kernel of sheaves $\mathcal{I}$ is concentrated in degree $0$ and satisfies $\mathcal{I}^2=0$.
Similarly, $\tau_{\leq2}\mathbb{L}_{Y/X}=0$ if and only if $f$ has the lifting property against all maps whose kernel is concentrated in degrees $<2$.
Here, the square-zero condition is trivial by degree arithmetic.
This argument can be extended to higher degree, namely, $\tau_{\leq n}\mathbb{L}_{Y/X}=0$ if and only if $f$ has the lifting property against all maps whose kernel is concentrated in degrees $<n$.
Therefore, the full cotangent complex vanishes if and only if $f$ lifts against maps with kernel concentrated in degrees $<n$ for all $n$.
This means $f$ induces isomorphisms on all homotopy groups, which in particular implies that $f$ is a quasi-isomorphism.
\end{proof}

\begin{remark}
The proof in \cite[Corollary 3.2.17]{Lu04} is performed largely by showing that if $f:A\to B$ of simplicial algebras is $n$-connected, then $\mathbb{L}_{A/B}$ is $n+2$-connected.
The proof here is almost the same, although we use a more global argument. 
\end{remark}

\begin{proposition}\label{Prop__DerSchComparison}
Let $f:(X_0,\mathcal{O}_{X,\bullet})\to(Y_0,\mathcal{O}_{Y,\bullet})$ be a morphism between DG schemes, where $X_0$ is a locally Noetherian scheme.
Then the following are equivalent:
\begin{enumerate}[label=(\alph*)]
\item $f$ is a quasi-isomorphism.
\item $\pi_0(f):\pi_0(X)\to\pi_0(Y)$ is an isomorphism, and for any field extension $F$ of $k$ and for any $F$-point $x\in X$, the induced maps $df_x:\pi_i(X,x)\to\pi_i(Y,f(x))$ are isomorphisms for all $i>0$.
\end{enumerate}
\end{proposition}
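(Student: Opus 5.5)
The plan is to mirror the proof of \Cref{Thm_SmoothnessHigherVanishing}, with \Cref{Lemma_GlobalNeeman} and \Cref{Lemma_GlobalLurie} replacing the affine citations.

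For (a)$\Rightarrow$(b): if $f$ is a quasi-isomorphism, then $\pi_0(f)$ is an isomorphism, since $\pi_0$ only depends on $\mathrm{H}_0$. By the fundamental triangle, $\mathbb{L}_{X/Y}\simeq0$. Therefore $f^*\mathbb{L}_{Y/k}\to\mathbb{L}_{X/k}$ is an equivalence. Applying $\operatorname{Map}(-,F_x)$ and using the identification (\ref{Eq_MappingSpTranslation}), the maps $df_x$ are isomorphisms.

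For (b)$\Rightarrow$(a), fix an $F$-point $x$ of $X$; it suffices to take points of $\pi_0(X)$, which is where the hypothesis is meaningful. First I will realize $df_x$ as $\operatorname{Map}_{\mathbf{Com}_F}(f_x,F)$. Here
\begin{equation*}
f_x:\;f^*\mathbb{L}_{Y/k}\otimes^{\mathbb{L}}F_x\to\mathbb{L}_{X/k}\otimes^{\mathbb{L}}F_x
\end{equation*}
is the fiber at $x$ of the first map in the fundamental triangle
\begin{equation*}
f^*\mathbb{L}_{Y/k}\to\mathbb{L}_{X/k}\to\mathbb{L}_{X/Y}\to.
\end{equation*}
This identification uses naturality of (\ref{Eq_MappingSpTranslation}) together with the adjunction $\operatorname{Map}_{\mathcal{O}_X}(\mathbb{L},F_x)\simeq\operatorname{Map}_F(\mathbb{L}\otimes^{\mathbb{L}}F_x,F)$.

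The hypothesis only gives isomorphisms for $i>0$, so I need to treat $i=0$ separately. In degree $0$, $\pi_0(X,x)$ is the Zariski tangent space of $\pi_0(X)$ at $x$, namely $\operatorname{Hom}(\Omega^1_{\mathrm{H}_0\mathcal{O}_X}\otimes F,F)$. Similarly $\pi_0(Y,f(x))$ is the Zariski tangent space of $\pi_0(Y)$. Since $\pi_0(f)$ is an isomorphism, $df_x$ is an isomorphism in degree $0$ as well. I would check this using $\mathrm{H}_0(\mathbb{L}_{X/k})=\Omega^1_{\mathrm{H}_0\mathcal{O}_X}$, up to the passage from $\mathcal{O}_{X,0}$ to $\mathrm{H}_0$, which is harmless for derivations into $F$.

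Because $F$ is a field, $\operatorname{Map}_F(-,F)$ is linear duality on homology and reflects quasi-isomorphisms. Hence $f_x$ is a quasi-isomorphism, and so $\mathbb{L}_{X/Y}\otimes^{\mathbb{L}}\kappa(x)\simeq0$ for every point $x$, after base change from $F$ to the residue field, which is faithfully flat. Points of $X_0$ outside $\pi_0(X)$ need an argument. The cohomology sheaves of $\mathbb{L}_{X/Y}$ are $\mathrm{H}_0\mathcal{O}_X$-modules, so I will regard $\mathbb{L}_{X/Y}$ as an object of $D(\mathrm{QCoh})$ supported on the closed subscheme $\pi_0(X)\subseteq X_0$. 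Its fibers at points off $\pi_0(X)$ then vanish automatically, and the locally Noetherian hypothesis passes to the closed subscheme. By \Cref{Lemma_GlobalNeeman}, $\mathbb{L}_{X/Y}\simeq0$, and \Cref{Lemma_GlobalLurie} then shows that $f$ is a quasi-isomorphism.

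The main obstacle is the support and module-structure issue in this last step. $\mathbb{L}_{X/Y}$ is a module over the DG sheaf $\mathcal{O}_{X,\bullet}$, not over $\mathcal{O}_{X_0}$, so Neeman's lemma must be applied to its image in $D(\mathrm{QCoh}(X_0))$. One then has to justify that the derived fiber over $\mathcal{O}_{X,\bullet}$ at $x$ agrees with the relevant fiber over $X_0$. My first attempt is to replace $\mathcal{O}_{X,\bullet}$ by $\mathrm{H}_0$ via a Postnikov/connectivity argument: a connective module $C$ with $C\otimes^{\mathbb{L}}_{\mathcal{O}_X}\kappa(x)\simeq0$ for all $x$ has vanishing lowest homology by Nakayama, which starts an induction on degree. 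Each step of that induction uses Neeman's lemma on $\pi_0(X)$ for the lowest homology sheaf.
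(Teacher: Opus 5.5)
Your route is the paper's: take the fiber at $x$ of the fundamental triangle $\mathbb{L}f^*\mathbb{L}_Y\to\mathbb{L}_X\to\mathbb{L}_{X/Y}$, dualize with $\operatorname{Map}(-,\kappa(x))$, identify the resulting map with $df_x$, use conservativity to kill the fiber of $\mathbb{L}_{X/Y}$, and finish with \Cref{Lemma_GlobalNeeman} and \Cref{Lemma_GlobalLurie}. You are more careful than the paper in two places it passes over. First, you supply degree $i=0$ from $\pi_0(f)$ via Zariski tangent spaces. This is needed: (b) only covers $i>0$, but the conservativity step needs an isomorphism in all degrees. Second, you notice that the fibers you actually control are $\mathbb{L}_{X/Y}\otimes^{\mathbb{L}}_{\mathcal{O}_{X,\bullet}}\kappa(x)$, taken over the DG sheaf and only at points of $\pi_0(X)$. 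By contrast, \Cref{Lemma_GlobalNeeman} concerns fibers over an ordinary locally Noetherian scheme.

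The repair you sketch for this second issue does not work as written. Let $C$ be connective with lowest homology $\mathrm{H}_n(C)$. The hypothesis $C\otimes^{\mathbb{L}}_{\mathcal{O}_{X,\bullet}}\kappa(x)\simeq0$ only yields the underived statement $\mathrm{H}_n(C)\otimes_{\mathrm{H}_0\mathcal{O}_X}\kappa(x)=0$.
\begin{itemize}
\item Nakayama cannot turn this into $\mathrm{H}_n(C)=0$, because nothing makes $\mathrm{H}_n(C)$ finitely generated. For example, the $k[t]$-module $k(t)/k[t]$ is nonzero but has all underived fibers zero.
\item Applying \Cref{Lemma_GlobalNeeman} to the single sheaf $\mathrm{H}_n(C)$ would need every $\mathrm{Tor}_j^{\mathrm{H}_0}(\mathrm{H}_n(C),\kappa(x))$ to vanish. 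You have no direct way to extract this, since in the spectral sequence computing $C\otimes^{\mathbb{L}}\kappa(x)$ these terms are entangled with the higher homology of $C$ and of $\mathcal{O}_{X,\bullet}$. A degree-by-degree induction on homology sheaves therefore does not isolate them.
\end{itemize}

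The fix is to base change the whole complex instead. Put $C':=\mathbb{L}_{X/Y}\otimes^{\mathbb{L}}_{\mathcal{O}_{X,\bullet}}\mathrm{H}_0(\mathcal{O}_{X,\bullet})$. This is a complex of quasi-coherent modules on $\pi_0(X)$, which is locally Noetherian as a closed subscheme of $X_0$. By associativity, $C'\otimes^{\mathbb{L}}_{\mathrm{H}_0}\kappa(x)\simeq\mathbb{L}_{X/Y}\otimes^{\mathbb{L}}_{\mathcal{O}_{X,\bullet}}\kappa(x)\simeq0$ for every $x\in\pi_0(X)$, so \Cref{Lemma_GlobalNeeman} gives $C'\simeq0$. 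Since $\mathbb{L}_{X/Y}$ is connective, its lowest nonzero homology sheaf $\mathrm{H}_n$ reappears unchanged in $C'$ as $\mathrm{H}_n\otimes_{\mathrm{H}_0}\mathrm{H}_0$, which forces $\mathbb{L}_{X/Y}\simeq0$. With this, your separate support argument for points of $X_0\setminus\pi_0(X)$ is no longer needed.
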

\begin{proof}
The direction (a)$\Longrightarrow$(b) is straightforward, so we only show (b)$\Longrightarrow$(a) here.

We have the distinguished triangle in $D(X)$ (hence in $D(X_0)$)
\begin{equation*}
\mathbb{L}f^*\mathbb{L}_{Y}\xrightarrow{\varphi}\mathbb{L}_{X}\to\mathbb{L}_{X/Y}\to,
\end{equation*}
where $\varphi$ is induced by $f:X\to Y$.
Given any point $x\in X_0$, applying the functor $-\otimes^\mathbb{L}_{\mathcal{O}_X}\kappa(x)$ yields another triangle
\begin{equation*}
\mathbb{L}f^*\mathbb{L}_{Y}\otimes^\mathbb{L}_{\mathcal{O}_X}\kappa(x)\xrightarrow{\varphi_x}\mathbb{L}_{X}\otimes^\mathbb{L}_{\mathcal{O}_X}\kappa(x)\to\mathbb{L}_{X/Y}\otimes^\mathbb{L}_{\mathcal{O}_X}\kappa(x)\to,
\end{equation*}
and then applying the functor $\operatorname{Map}_{\kappa(x)}(-,\kappa(x))$ and derived tensor-hom adjunction gives the following triangle
\begin{equation}\label{Eq_ComparisonTriangle}
\operatorname{Map}_{\mathbf{Mod}_{\mathcal{O}_{X}}}(\mathbb{L}_{X/Y},\kappa(x))\to\operatorname{Map}_{\mathbf{Mod}_{\mathcal{O}_{X}}}(\mathbb{L}_{X},\kappa(x))\xrightarrow{\varphi_x^*}\operatorname{Map}_{\mathbf{Mod}_{\mathcal{O}_{X}}}(\mathbb{L}f^*\mathbb{L}_{Y},\kappa(x))\to.
\end{equation}
By \Cref{Eq_MappingSpTranslation}, $\pi_i(X,x)\cong\mathrm{H}_i(\operatorname{Map}_{\mathbf{Mod}_{\mathcal{O}_{X}}}(\mathbb{L}_{X/k},\kappa(x)))$, and the map $\pi_i(\varphi_x^*)$ in \Cref{Eq_ComparisonTriangle} can be identified with $\pi_i(f)$ by functoriality.
Hence $\operatorname{Map}_{\mathbf{Mod}_{\mathcal{O}_{X}}}(\mathbb{L}_{X/Y},\kappa(x))\simeq0$ by assumption (b).
Since the functor $\operatorname{Map}_{\kappa(x)}(-,\kappa(x))$ is conservative, $\mathbb{L}_{X/Y}\otimes^\mathbb{L}_{\mathcal{O}_X}\kappa(x)\simeq0$.
\Cref{Lemma_GlobalNeeman} implies that $\mathbb{L}_{X/Y}\simeq0$, and $f$ is a quasi-isomorphism by \Cref{Lemma_GlobalLurie}.
\end{proof}

\begin{corollary}
\Cref{Lemma_DerSchComparison} holds.
\end{corollary}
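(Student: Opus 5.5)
The corollary asserts that \Cref{Lemma_DerSchComparison} follows from \Cref{Prop__DerSchComparison}. The plan is to check that the DG-manifold hypotheses of \Cref{Lemma_DerSchComparison} are a special case of the hypotheses of \Cref{Prop__DerSchComparison}, and that the two lists of equivalent conditions (a) and (b) are literally the same.

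First, hypothesis (1) says that $X_0$ is an ordinary smooth variety over $k$. Such a variety is of finite type over a field, hence locally Noetherian. This is the only hypothesis on $X$ used in \Cref{Prop__DerSchComparison}. In particular, no assumption on $Y_0$ beyond its being a DG scheme is needed.

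Second, I would check that the objects $X$, $Y$ and $f$ in \Cref{Lemma_DerSchComparison} are DG schemes and a morphism of DG schemes in the sense of the definition recalled before \Cref{Proposition_HigherDRepAtPoint}. We have $\mathcal{O}_{X,\bullet}\cong\operatorname{Sym}_{\mathcal{O}_{X,0}}(E_\bullet)$ with $E_\bullet$ locally of finite dimension. Hence each graded piece $\mathcal{O}_{X,i}$ is a finite direct sum of $\mathcal{O}_{X,0}$-modules of the form $\mathcal{O}_{X,0}\otimes(\text{finite-dimensional space})$, and so is quasi-coherent. The differential is part of the given DG structure. Hypothesis (2) therefore only serves to put us in the DG-scheme setting. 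Its finiteness is not needed for the argument, which relies on the cotangent complex, Neeman's fiberwise criterion (\Cref{Lemma_GlobalNeeman}), and \Cref{Lemma_GlobalLurie}.

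Third, conditions (a) and (b) in the two statements coincide verbatim, including the definition of $\pi_i(X,x)$ via the DG tangent space and the identification \Cref{Eq_MappingSpTranslation}. So \Cref{Prop__DerSchComparison} yields (a)$\Leftrightarrow$(b) directly.

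The only point I expect to need care is consistency of conventions. In \Cref{Lemma_DerSchComparison}, $\pi_i(X,x)$ is defined through $\underline{\mathrm{Der}}_k(\mathcal{O}_X,F_x)$, whereas \Cref{Prop__DerSchComparison} uses $\operatorname{Map}(\mathbb{L}_{X/k},\kappa(x))$ and points $x\in X_0$ with residue fields. To reconcile these, I would invoke the remark identifying the derivation complex with the mapping space out of the cotangent complex. I would also note that every scheme point $x\in X_0$ is an $F$-point for $F=\kappa(x)$, so condition (b) quantified over all field-valued points covers all residue fields, as \Cref{Lemma_GlobalNeeman} requires. With these identifications the corollary is immediate.
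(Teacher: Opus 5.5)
Your proposal is correct and follows the paper's proof. The paper likewise just observes that the smooth variety $X_0$ is locally Noetherian (it also mentions quasi-compact and quasi-separated, which are not needed) and then applies \Cref{Prop__DerSchComparison}, whose conditions (a) and (b) are identical to those of \Cref{Lemma_DerSchComparison}. Your extra checks on quasi-coherence and on matching the conventions for $\pi_i(X,x)$ are harmless but unnecessary, since the DG-scheme structure is already part of the hypotheses of the lemma.
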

\begin{proof}
Since $X_0$ is a smooth variety, it is a quasi-compact, quasi-separated, locally Noetherian scheme.
Hence, by \Cref{Prop__DerSchComparison}, the two conditions in \Cref{Lemma_DerSchComparison} are equivalent.
\end{proof}

%\Cref{Prop__DerSchComparison} holds true under the substantially weaker assumption that the underlying classical scheme is locally Noetherian.

\begin{remark}
We can also prove the comparison result under other structural or finiteness assumptions.
For instance, \cite[Proposition 3.2.18]{Lu04} identifies finite presentation with the perfection of $\mathbb{L}_{X/Y}$ together with a condition on $\pi_0$.
Separately, a result by Hopkins (also see \cite{N92}) says that the fiber test is sufficient for a perfect complex without the Noetherian assumption.
However, these do not seem to help loosen the hypothesis in \Cref{Thm_SmoothnessHigherVanishing}; hence we will not discuss them.
\end{remark}

\end{document}